\DeclareMathOperator*{\esssup}{ess\,sup}
\DeclareMathOperator*{\essinf}{ess\,inf}
\DeclareMathOperator*{\osc}{osc}
\numberwithin{equation}{section}
\newtheorem{theorem}{Theorem}[section]
\newtheorem{lemma}{Lemma}[section]
\newtheorem{remark}{Remark}[section]
\newtheorem{definition}{Definition}[section]
\newtheorem{example}{Example}[section]
\def\XXint#1#2#3{{\setbox0=\hbox{$#1{#2#3}{\int}$}
     \vcenter{\hbox{$#2#3$}}\kern-.5\wd0}}
\begin{document}

\title{Interior continuity, continuity up to the boundary and Harnack's inequality
for double-phase elliptic equations with non-logarithmic conditions
}

\author{Oleksandr V. Hadzhy, \ Igor I. Skrypnik, \ Mykhailo V. Voitovych
 }

  \maketitle

  \begin{abstract}
We prove continuity and Harnack's inequality for bounded solutions to
elliptic equations of the type
$$
\begin{aligned}
{\rm div}\big(|\nabla u|^{p-2}\,\nabla u+a(x)|\nabla u|^{q-2}\,\nabla u\big)=0,&
\quad a(x)\geqslant0,
\\
|a(x)-a(y)|\leqslant A|x-y|^{\alpha}\mu(|x-y|),&
\quad x\neq y,
\\
{\rm div}\Big(|\nabla u|^{p-2}\,\nabla u \big[1+\ln(1+b(x)\, |\nabla u|) \big] \Big)=0,&
\quad b(x)\geqslant0,
\\
|b(x)-b(y)|\leqslant B|x-y|\,\mu(|x-y|),&
\quad x\neq y,
\end{aligned}
$$
$$
\begin{aligned}
{\rm div}\Big(|\nabla u|^{p-2}\,\nabla u+
c(x)|\nabla u|^{q-2}\,\nabla u \big[1+\ln(1+|\nabla u|) \big]^{\beta} \Big)=0,&
\quad c(x)\geqslant0, \, \beta\geqslant0,\phantom{=0=0}
\\
|c(x)-c(y)|\leqslant C|x-y|^{q-p}\,\mu(|x-y|),&
\quad x\neq y,
\end{aligned}
$$
under the precise choice of $\mu$.

\textbf{Keywords:}
double-phase elliptic equations, non-logarithmic conditions, continuity of solutions,
regularity of a boundary point, Harnack's inequality.

\textbf{MSC (2010)}: 35B65, 35D30, 35J60, 49N60.

\end{abstract}

\pagestyle{myheadings} \thispagestyle{plain}
\markboth{Igor I. Skrypnik}
{Interior continuity, continuity up to the boundary and Harnack's inequality
for . . . .}

\section{Introduction and main results}\label{Introduction}

Let $\Omega$ be a bounded domain in $\mathbb{R}^{n}$, $n\geqslant2$.
In this paper we are concerned with elliptic equations of the type
\begin{equation}\label{eq1.1}
{\rm div} \mathbf{A}(x, \nabla u)=0, \quad x\in\Omega.
\end{equation}
We suppose that the functions $\mathbf{A}:\Omega\times\mathbb{R}^{n}\rightarrow\mathbb{R}^{n}$
are such that $\mathbf{A}(\cdot,\xi)$ are Lebesgue measurable for all $\xi\in \mathbb{R}^{n}$,
and $\mathbf{A}(x,\cdot)$ are continuous for almost all $x\in\Omega$.
We assume also that the following structure conditions are satisfied
\begin{equation}\label{eq1.2}
\begin{aligned}
\mathbf{A}(x,\xi)\,\xi&\geqslant K_{1}\,g(x,|\xi|)\,|\xi|,
\\
|\mathbf{A}(x,\xi)|&\leqslant K_{2}\,g(x,|\xi|),
\end{aligned}
\end{equation}
where $K_{1}$, $K_{2}$ are positive constants. 

This type of equations belongs to a wide class of elliptic equations with generalized
Orlicz growth. In terms of the function $g$, this class can be characterized as follows.
Let $g(x, {\rm v}):\Omega\times \mathbb{R}_{+}\rightarrow \mathbb{R}_{+}$ be a non-negative
function satisfying the following properties: for any $x\in\Omega$
 the function ${\rm v}\rightarrow g(x,{\rm v})$ is increasing and
 $\lim\limits_{{\rm v}\rightarrow0}g(x,{\rm v})=0$,
 $\lim\limits_{{\rm v}\rightarrow +\infty}g(x,{\rm v})=+\infty$,
 and $c_{0}^{-1}\leqslant g(x,1)\leqslant c_{0}$ with some positive $c_{0}$.
 In addition, the following conditions hold:
 \begin{itemize}
\item[(${\rm g}_{1}$)]
There exist $1<p<q$ such that
for $x\in \Omega$ and for ${\rm w}\geqslant{\rm v}> 0$ there holds
\begin{equation*}\label{gqineq}
\left( \frac{{\rm w}}{{\rm v}} \right)^{p-1} \leqslant\frac{g(x, {\rm w})}{g(x, {\rm v})}\leqslant
 \left( \frac{{\rm w}}{{\rm v}} \right)^{q-1}.
\end{equation*}
\end{itemize}
\begin{itemize}
\item[(${\rm g}_{2}$)]
Fix $R>0$ such that $B_{R}(x_{0})\subset\Omega$.
There exists $c>0$ and positive, continuous and non-decreasing function
$\lambda(r)$ on the interval $(0,R)$, $\lambda(r)\leqslant1$,
$\lim\limits_{r\rightarrow0}r^{1-\delta_{0}}/\lambda(r)=0$, and
$\lambda(r)\leqslant(3/2)^{1-\delta_{0}}\lambda(r/2)$ with some $\delta_{0}\in (0,1)$,
such that for any $K>0$ there holds
$$
g(x,  {\rm v}/r)\leqslant c\,K^{c}\, g(y,  {\rm v}/r),
$$
for any $x, y\in B_{r}(x_{0})\subset B_{R}(x_{0})$
and for all
$r\leqslant {\rm v}\leqslant K\lambda(r)$.
\end{itemize}

We note that condition (${\rm g}_{1}$) and conditions
$({\rm aDec})_{q}$, $({\rm aInc})_{p}$ from
\cite{HarHastLee18} coincide. Moreover, in the case $\lambda(r)=1$
condition (${\rm g}_{2}$) and condition (${\rm A}1$-n) from \cite{HarHastLee18} are equivalent.

Sometimes we will assume that condition (${\rm g}_{2}$) holds for $x_{0}\in\partial\Omega$,
in this case we will assume that there exists $R>0$ such that for every ${\rm v}>0$ the
function $g(\cdot,  {\rm v})$ is defined in $B_{R}(x_{0})$ and for any
$x,y\in B_{r}(x_{0})\subset B_{R}(x_{0})$ condition (${\rm g}_{2}$) is valid.

\begin{remark}
{\rm
The function $g_{a(x)}({\rm v}):={\rm v}^{\,p-1}+a(x){\rm v}^{\,q-1}$, ${\rm v}>0$, where
$a(x)\geqslant0$,
$$
|a(x)-a(y)|\leqslant A|x-y|^{\alpha}\mu(|x-y|), \ \  x,y\in \Omega, \ \
x\neq y,
$$
$$
A>0, \ \ 0<q-p\leqslant\alpha\leqslant1, \ \
\lim\limits_{r\rightarrow0}\mu(r)=+\infty, \ \
\lim\limits_{r\rightarrow0}r^{\alpha}\mu(r)=0,
$$
satisfies condition (${\rm g}_{2}$)
with $\lambda(r)=1$, if $q-p<\alpha\leqslant1$ and $r^{\alpha+p-q}\mu(r)\leqslant1$.
Indeed,
\begin{multline*}
g_{a(x)}\left({\rm v}/r\right)
-g_{a(y)}\left({\rm v}/r\right)
\leqslant
|a(x)-a(y)|\,\left(\frac{{\rm v}}{r}\right)^{q-1}
\\
\leqslant A\,r^{\alpha+p-q}\,\mu(r)\,{\rm v}^{\,q-p}\,
\left(\frac{{\rm v}}{r}\right)^{p-1}
\leqslant AK^{q-p}\,\left(\frac{{\rm v}}{r}\right)^{p-1}
\leqslant AK^{q-p}\,g_{a(y)}\left({\rm v}/r\right),
\ \ \text{if} \ \ r\leqslant{\rm v}\leqslant K.
\end{multline*}

Moreover, the function $g_{a(x)}(\cdot)$ satisfies condition
(${\rm g}_{2}$) with $\lambda(r)=\mu^{-\frac{1}{q-p}}(r)$, if $\alpha=q-p$.
Indeed,
$$
\begin{aligned}
g_{a(x)}\left({\rm v}/r\right)
-g_{a(y)}\left({\rm v}/r\right)
\leqslant A\,\mu(r)\,{\rm v}^{\,q-p}&\left(\frac{{\rm v}}{r}\right)^{p-1}
\\
\leqslant
AK^{q-p}&\left(\frac{{\rm v}}{r}\right)^{p-1}\leqslant
AK^{q-p}g_{a(y)}\left({\rm v}/r\right),
\ \ \text{if} \ r\leqslant {\rm v}\leqslant K\lambda(r).
\end{aligned}
$$

The function
$g_{b(x)}({\rm v}):= {\rm v}^{\,p-1}\Big[ 1+\ln\big(1+b(x){\rm v}\big) \Big]$,
${\rm v}>0$, where $b(x)\geqslant0$,
$$
|b(x)-b(y)|\leqslant B|x-y|\,\mu(|x-y|), \ \  x,y\in \Omega, \ \
x\neq y,
$$
$$
B>0, \ \
\lim\limits_{r\rightarrow0}\mu(r)=+\infty, \ \
\lim\limits_{r\rightarrow0}r\mu(r)=0,
$$
satisfies condition (${\rm g}_{2}$)
with $\lambda(r)=1/\mu(r)$. Indeed,
\begin{multline*}
g_{b(x)}\left(\frac{\lambda(r){\rm v}}{r}\right)
-g_{b(y)}\left(\frac{\lambda(r){\rm v}}{r}\right)
\\
\leqslant
\left(\frac{\lambda(r){\rm v}}{r}\right)^{p-1}
\left|\ln \frac{1+b(x)\dfrac{\lambda(r){\rm v}}{r}}
{1+b(y)\dfrac{\lambda(r){\rm v}}{r}} \right|
\leqslant
\left(\frac{\lambda(r){\rm v}}{r}\right)^{p-1}
\ln\left( 1+|b(x)-b(y)|\, \frac{\lambda(r){\rm v}}{r} \right)
\\
\leqslant
\left(\frac{\lambda(r){\rm v}}{r}\right)^{p-1}
\ln\Big(1+B{\rm v}\lambda(r)\mu(r)\Big)
\leqslant \left(\frac{\lambda(r){\rm v}}{r}\right)^{p-1}
\ln(2+BK), \ \ \text{if} \ \ r\leqslant{\rm v}\leqslant K.
\hskip 7,8mm
\end{multline*}

Similarly, the function
$g_{c(x)}({\rm v}):={\rm v}^{\,p-1}+c(x){\rm v}^{\,q-1}
\left[1+\ln(1+{\rm v})\right]^{\beta}$, ${\rm v}>0$, $\beta\geqslant0$,
$c(x)\geqslant0$,
$$
|c(x)-c(y)|\leqslant C|x-y|^{q-p}\mu(|x-y|),
\ \ x,y\in \Omega, \ \ x\neq y,
$$
$$
C>0, \ \
\lim\limits_{r\rightarrow0}\mu(r)=+\infty, \ \
\lim\limits_{r\rightarrow0}r^{q-p}\mu(r)=0,
$$
satisfies condition (${\rm g}_{2}$) with
$\lambda(r)=\mu^{-\frac{1}{q-p}}(r)\ln^{-\frac{\beta}{q-p}}\dfrac{1}{r}$.
}
\end{remark}
\begin{remark}
{\rm Let's consider the functions
$g_{1}(x,{\rm v})={\rm v}^{\,p(x)-1}$,
$g_{2}(x,{\rm v})={\rm v}^{\,p-1}\big(1+b(x)\ln(1+{\rm v}) \big)$,
${\rm v}>0$, $b(x)\geqslant 0$ in $\Omega$,
$$
|p(x)-p(y)|+|b(x)-b(y)|\leqslant L\left( \ln \frac{1}{r\mu(r)} \right)^{-1},
\quad x,y\in B_{r}(x_{0}),
$$
$$
0<L<+\infty, \quad \lim\limits_{r\rightarrow 0}\mu(r)=+\infty,
\quad \lim\limits_{r\rightarrow 0}r^{1-\delta_{0}}\mu(r)=0 \
\text{ with some } \ \delta_{0}\in(0,1).
$$

It is obvious that the functions $g_{1}$, $g_{2}$ satisfy condition (${\rm g}_{2}$)
with $\lambda(r)=1/\mu(r)$. By our choices the following inequalities hold:
$\delta_{0}\ln\dfrac{1}{r}\leqslant \ln \dfrac{1}{r\mu(r)}\leqslant \ln \dfrac{1}{r}$.
So, in this case condition (${\rm g}_{2}$) is equivalent to the logarithmic Zhikov's condition
\cite{ZhikJMathPh94}.
In this case, the qualitative properties of solutions are well known
(see e.g. \cite{Alhutov97, AlhutovMathSb05, AlhutovKrash04, AlkhSurnApplAn19, AlkhSurnJmathSci20, Fan1995, FanZhao1999, HarHasZAn19, HarHastLee18, HarHastToiv17, Krash2002}).

}
\end{remark}

The aim of this paper is to establish basic qualitative properties such as interior
continuity of bounded solutions, their continuity up to the boundary and Harnack's inequality
for non-negative bounded solutions to Eq. \eqref{eq1.1}.

Before formulating the main results, let us recall the definition of a bounded weak solution
to Eq. \eqref{eq1.1}. We set $G(x,{\rm v}):=g(x,{\rm v}){\rm v}$ for $x\in \Omega$ and
${\rm v}\geqslant0$ and write $W(\Omega)$ for the class of functions
$u\in W^{1,1}(\Omega)$ with $\int\limits_{\Omega}G(x,|\nabla u|)\,dx<+\infty$.
%
%
We also need a class of functions $W_{0}(\Omega)$ which consists of functions
$u\in W_{0}^{1,1}(\Omega)$ 
such that $\int\limits_{\Omega}G(x,|\nabla u|)\,dx<+\infty$.
\begin{definition}
{\rm
We say that a function $u\in W(\Omega)\cap L^{\infty}(\Omega)$ is a bounded weak
sub(super)-solution
to Eq. \eqref{eq1.1} if 
\begin{equation}\label{eq1.5}
\int_{\Omega}\mathbf{A}(x, \nabla u)\,\nabla\varphi \,dx\leqslant(\geqslant)\,0,
\end{equation}
holds for all non-negative test functions $\varphi\in W_{0}(\Omega)$.
}
\end{definition}
\begin{remark}
{\rm
Note that we are dealing only with bounded solutions. To prove the boundedness
we need additional assumptions either on the function $g$ (see, for example
\cite{BarColMingCalc.Var.18, ColMing218, ColMing15, HarHastLee18, HarHastToiv17}),
or on the solution itself (see \cite{BenHarHasKarp20, OkNA20}).
}
\end{remark}

Our first main result of this paper reads as follows.

\begin{theorem}\label{th1.1}
Fix $x_{0}\in \Omega$, let $u$ be a bounded weak solution to Eq. \eqref{eq1.1}
and let the assumptions {\rm (${\rm g}_{1}$)}, {\rm (${\rm g}_{2}$)} be fulfilled.
Then there exists positive number $C_{1}$ depending only on $K_{1}$, $K_{2}$, $n$, $p$, $q$,
$c_{0}$, $c$ and $M:=\esssup\limits_{\Omega}|u|$ such that 
\begin{equation}\label{eq1.7}
\osc\limits_{B_{r}(x_{0})}u\leqslant
2M\exp\left( -C_{1}\int\limits_{2r}^{\rho}\lambda(t)\,\frac{dt}{t} \right)
+C_{1}\,\frac{\rho}{\lambda(\rho)},
\end{equation}
for any $0<2r<\rho<R$. If additionally
\begin{equation}\label{eq1.8}
\int\limits_{0}\lambda(t)\,\frac{dt}{t}=+\infty,
\end{equation}
then $u$ is continuous at $x_{0}$.
\end{theorem}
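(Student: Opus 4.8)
The plan is to run a De Giorgi oscillation–reduction scheme in the geometry intrinsic to $\lambda$, and then to iterate it over a geometric sequence of radii. Fix $B_{R}(x_{0})\subset\Omega$ as in (${\rm g}_{2}$); since $u$ is a bounded weak solution, $M<+\infty$ is at our disposal. For $0<\rho'<R$ write $\mu^{+}(\rho')=\esssup_{B_{\rho'}(x_{0})}u$, $\mu^{-}(\rho')=\essinf_{B_{\rho'}(x_{0})}u$, and $\omega(\rho')=\osc_{B_{\rho'}(x_{0})}u=\mu^{+}(\rho')-\mu^{-}(\rho')\leqslant 2M$; recall $\omega(\cdot)$ is nondecreasing. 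Everything reduces to the one-step alternative: there are $c_{\ast}\in(0,1)$ and $C_{\ast}\geqslant 1$, depending only on $K_{1},K_{2},n,p,q,c_{0},c,M$, such that for every $\rho'<R$
\[
\text{either}\qquad\omega(\rho')\leqslant C_{\ast}\,\frac{\rho'}{\lambda(\rho')},
\qquad\text{or}\qquad\omega(\rho'/2)\leqslant\bigl(1-c_{\ast}\lambda(\rho')\bigr)\,\omega(\rho').
\]

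To obtain this I would start from the Caccioppoli inequalities for the truncations: testing \eqref{eq1.5} with $\zeta^{q}(u-k)_{+}$ (resp.\ $\zeta^{q}(k-u)_{+}$) for a standard cut-off $\zeta$ in $B_{\rho'}(x_{0})$ and using \eqref{eq1.2} and (${\rm g}_{1}$), one bounds $\int\zeta^{q}\,g(x,|\nabla(u-k)_{\pm}|)\,|\nabla(u-k)_{\pm}|\,dx$ by $\int g\bigl(x,(u-k)_{\pm}|\nabla\zeta|\bigr)\,(u-k)_{\pm}|\nabla\zeta|\,dx$ plus lower-order terms, the argument of $g$ on the right being of size $\sigma/\rho'$ with $\sigma:=|\mu^{\pm}(\rho')-k|$ the cut level. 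Condition (${\rm g}_{2}$) is used to freeze the $x$-dependence: on $B_{\rho'}(x_{0})$ the weight $g(\cdot,{\rm v}/\rho')$ is comparable to $g(x_{0},{\rm v}/\rho')$ with a data-controlled constant, provided $K$ may be taken \emph{absolute}, i.e.\ provided $\rho'\leqslant{\rm v}\leqslant K\lambda(\rho')$ --- equivalently, provided all gradient magnitudes in play stay in the band $[1,K\lambda(\rho')/\rho']$. If the first alternative fails, so $\omega(\rho')>C_{\ast}\rho'/\lambda(\rho')\geqslant C_{\ast}\rho'$, one chooses $\sigma\asymp\min\{\omega(\rho'),\lambda(\rho')\}$, which (for $C_{\ast}$ large enough, depending on the data through $K$) keeps $\rho'\leqslant\sigma\leqslant K\lambda(\rho')$; then the $x$-dependence of $g$ is frozen at $x_{0}$ at the cost of a data-controlled constant and the scheme becomes the classical one. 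A De Giorgi iteration turns the Caccioppoli bound into the implication ``if $\bigl|\{\pm(u-k)>0\}\cap B_{\rho'}(x_{0})\bigr|\leqslant\nu\,|B_{\rho'}(x_{0})|$ with $\nu=\nu(\text{data})$, then $\pm(u-k)\leqslant\sigma/2$ on $B_{\rho'/2}(x_{0})$'', and a local clustering (De Giorgi isoperimetric) lemma shows that, starting from the trivial split ``at most half the ball'', finitely many halvings of the level gap (a number depending only on the data) bring the measure below $\nu$. Running this downwards from $\sigma\asymp\min\{\omega(\rho'),\lambda(\rho')\}$ lowers the oscillation by a fixed fraction of $\min\{\omega(\rho'),\lambda(\rho')\}$, which --- since $\omega(\rho')\leqslant2M$ and $\lambda\leqslant1$ --- is at least $c_{\ast}\lambda(\rho')\,\omega(\rho')$; this is the second alternative. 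The width $\sim\lambda(\rho')/\rho'$ of the band on which (${\rm g}_{2}$) is effective with an absolute $K$ is precisely what produces the factor $\lambda(\rho')$.

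Granting the one-step alternative, \eqref{eq1.7} follows by iteration on $r_{j}=\rho\,2^{-j}$. If $\omega(r_{i})>C_{\ast}r_{i}/\lambda(r_{i})$ for all $i<j$, then $\omega(r_{j})\leqslant\omega(\rho)\prod_{i<j}(1-c_{\ast}\lambda(r_{i}))\leqslant 2M\exp\!\bigl(-c_{\ast}\sum_{i<j}\lambda(r_{i})\bigr)$, and since $\lambda$ is nondecreasing, $\sum_{i<j}\lambda(r_{i})\geqslant(\ln 2)^{-1}\int_{r_{j}}^{\rho}\lambda(t)\,\frac{dt}{t}$, which yields the first term of \eqref{eq1.7}. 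If instead $\omega(r_{j_{0}})\leqslant C_{\ast}r_{j_{0}}/\lambda(r_{j_{0}})$ for some $j_{0}<j$, then $\omega(r_{j})\leqslant\omega(r_{j_{0}})\leqslant C_{\ast}r_{j_{0}}/\lambda(r_{j_{0}})\leqslant C_{\ast}\rho/\lambda(\rho)$, the last step because $\lambda(r)\leqslant(3/2)^{1-\delta_{0}}\lambda(r/2)$ forces $t\mapsto t/\lambda(t)$ to be (up to a fixed factor) nondecreasing. Combining the two cases, choosing $j$ with $r_{j}\in(r,2r]$ (possible since the dyadic grid has ratio $2$) so that $\osc_{B_{r}(x_{0})}u\leqslant\omega(r_{j})$ and $\int_{r_{j}}^{\rho}\lambda\,\frac{dt}{t}\geqslant\int_{2r}^{\rho}\lambda\,\frac{dt}{t}$, and absorbing the $\ln 2$'s into a single constant $C_{1}$, one gets \eqref{eq1.7}. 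Finally, if \eqref{eq1.8} holds then $\int_{2r}^{\rho}\lambda(t)\,\frac{dt}{t}\to+\infty$ as $r\to0$ for each fixed $\rho$, so the first term of \eqref{eq1.7} tends to $0$; letting then $\rho\to0$ and using $\rho/\lambda(\rho)=\rho^{\delta_{0}}\cdot\rho^{1-\delta_{0}}/\lambda(\rho)\to0$ (which holds by $\lim_{r\to0}r^{1-\delta_{0}}/\lambda(r)=0$) removes the second term as well, giving $\lim_{r\to0}\osc_{B_{r}(x_{0})}u=0$, i.e.\ continuity of $u$ at $x_{0}$.

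The main obstacle is the one-step alternative, and inside it the De Giorgi lemma in the intrinsic geometry: one must check that the cut level $\sigma$ and \emph{every} gradient magnitude entering the Caccioppoli estimate and the subsequent iteration stay in the band $[\rho',K\lambda(\rho')]$ on which (${\rm g}_{2}$) yields a comparison constant with $K$ absolute --- otherwise $K^{c}$ blows up as $\rho'\to0$ and the whole scheme, as well as the claimed $\lambda$-independence of $C_{1}$, collapses. This constraint is exactly what forces the dichotomy, and with it both the factor $\lambda(\rho')$ in the reduction and the correction term $\rho/\lambda(\rho)$. The remaining ingredients --- the Caccioppoli inequalities, the Orlicz--Sobolev embedding underlying the De Giorgi iteration, and the local clustering lemma --- are the generalized-Orlicz counterparts of classical facts and go through on the strength of (${\rm g}_{1}$) (the two-sided doubling of $G(x,\cdot)$) together with the granted bound $M<+\infty$.
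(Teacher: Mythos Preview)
Your plan is correct and matches the paper's proof essentially line for line: the paper derives the same one-step alternative $\omega_{\rho/2}\leqslant(1-C_{\ast}^{-1}\lambda(\rho))\omega_{\rho}+C_{\ast}\rho/\lambda(\rho)$ from a De\,Giorgi lemma plus a clustering (De\,Giorgi--Poincar\'e) step, freezing the $x$-dependence of $g$ via $({\rm g}_{2})$ by working at levels of size $\lambda(\rho)\omega_{\rho}$, and then iterates dyadically. The only cosmetic differences are that the paper packages the Caccioppoli estimates into an abstract $\mathcal{B}_{1,g}$ class, takes the cut level as the product $\lambda(\rho)\omega_{\rho}$ rather than your $\min\{\omega(\rho),\lambda(\rho)\}$ (equivalent up to the factor $2M$, as you note), and iterates the combined additive inequality rather than splitting into your two cases.
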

\begin{remark}
{\rm
For the function $g_{a(x)}(\cdot)$ condition \eqref{eq1.8}
is evidently fulfilled in the case $\alpha>q-p$ and $\mu(r)=\ln^{\beta}\dfrac{1}{r}$,
$\beta>0$, since in this case $\lambda(r)=1$.
In the case $\alpha=q-p$, condition \eqref{eq1.8} can be rewritten as
$$
\int\limits_{0} \,\mu^{-\frac{1}{q-p}}(r)\,\frac{dr}{r}=+\infty.
$$
This condition holds, for example, if
$\mu(r)=\ln^{\beta}\dfrac{1}{r}$ and $0\leqslant\beta\leqslant q-p$.

For the function $g_{b(x)}(\cdot)$ condition \eqref{eq1.8} can be rewritten as
$$
\int\limits_{0}\frac{1}{\mu(r)}\,\frac{dr}{r}=+\infty.
$$
The function $\mu(r)=\ln \dfrac{1}{r}$ satisfies the above condition.

For the function $g_{c(x)}(\cdot)$ condition \eqref{eq1.8} can be rewritten as
$$
\int\limits_{0}
\mu^{-\frac{1}{q-p}}(r)\ln^{-\frac{\beta}{q-p}}\frac{1}{r}\,
\frac{dr}{r}=+\infty.
$$
The function $\mu(r)=\ln^{\beta_{1}}\dfrac{1}{r}$, $\beta_{1}\geqslant0$,
$\beta_{1}+\beta\leqslant q-p$ satisfies the above condition.
}
\end{remark}

To formulate our next result, we need the notion of the capacity.
Fix $x_{0}\in \mathbb{R}^{n}$, let $E\subset B_{r}(x_{0})\subset  B_{\rho}(x_{0})$
and for any $m>0$ set
$$
C(E, B_{8\rho}(x_{0});m):=\inf\limits_{\varphi\in \mathfrak{M}(E)}
\int\limits_{B_{8\rho}(x_{0})}g(x,m|\nabla \varphi|)\,|\nabla \varphi|\,dx,
$$
where the infimum is taken over the set $\mathfrak{M}(E)$ of all functions
$\varphi\in W_{0}(B_{8\rho}(x_{0}))$ with $\varphi\geqslant1$ on $E$. If $m=1$,
this definition leads to the standard definition of $C_{G}(E, B_{8\rho}(x_{0}))$
capacity (see, e.g. \cite{HarHasZAn19}).

Note that if $E=B_{\rho}(x_{0})$ and $m\leqslant \lambda(8\rho)$, then
$C(B_{\rho}(x_{0}), B_{8\rho}(x_{0});m)\asymp \rho^{n-1}g(x_{0},m/\rho)$.
The notation $f\asymp g$ means that there exists a constant $\gamma>0$ such that
$\gamma^{-1}f\leqslant g\leqslant \gamma f$. Let $\varphi\in W_{0}(B_{8\rho}(x_{0}))$
be such that $0\leqslant\varphi\leqslant1$, $\varphi=1$ in $B_{\rho}(x_{0})$ and
$|\nabla \varphi|\leqslant 8/\rho$. Then by {\rm (${\rm g}_{1}$)} and {\rm (${\rm g}_{2}$)} we
obtain
$$
\begin{aligned}
C(B_{\rho}(x_{0}), B_{8\rho}(x_{0});m)\leqslant
&\int\limits_{B_{8\rho}(x_{0})}g(x,m|\nabla \varphi|)\,|\nabla \varphi|\,dx
\\
\leqslant
\frac{8^{q}}{\rho} &\int\limits_{B_{8\rho}(x_{0})}g(x,m/\rho)\,dx
\leqslant c\, 8^{q} g(x_{0},m/\rho)\,\rho^{n-1}.
\end{aligned}
$$
For the opposite inequality, we need the following analogue of the Young inequality:
\begin{equation}\label{Youngineq}
g(x,a)b\leqslant \varepsilon g(x,a)a+ g(x,b/\varepsilon)b,
\quad \varepsilon,\, a,\, b>0, \ x\in\Omega.
\end{equation}
By {\rm (${\rm g}_{1}$)}, {\rm (${\rm g}_{2}$)} and the definition of $1$-capacity we have
$$
\begin{aligned}
C_{1}(B_{\rho}(x_{0}), B_{8\rho}(x_{0}))\,&g(x_{0},m/\rho)
\\
&\leqslant
g(x_{0},m/\rho)\int\limits_{B_{8\rho}(x_{0})}|\nabla \varphi|\,dx
\leqslant
\int\limits_{B_{8\rho}(x_{0})} g(x,m/\rho)\,|\nabla \varphi|\,dx
\\
&\leqslant \frac{\varepsilon m}{\rho}
\int\limits_{B_{8\rho}(x_{0})} g(x,m/\rho)\,dx+
\int\limits_{B_{8\rho}(x_{0})} g(x,|\nabla \varphi|/\varepsilon )\,|\nabla \varphi|\,dx
\\
&\leqslant
\varepsilon c\,m g(x_{0},m/\rho)\rho^{n-1}+
\int\limits_{B_{8\rho}(x_{0})} g(x,|\nabla \varphi|/\varepsilon )\,|\nabla \varphi|\,dx.
\end{aligned}
$$
Choosing $\varepsilon=\varepsilon_{1}/m$, $\varepsilon_{1}\in(0,1)$, from the last inequality
we obtain
$$
C_{1}(B_{\rho}(x_{0}), B_{8\rho}(x_{0}))\,g(x_{0},m/\rho)
\leqslant
c\,\varepsilon_{1}g(x_{0},m/\rho)\rho^{n-1}+\varepsilon_{1}^{1-q}
\int\limits_{B_{8\rho}(x_{0})} g(x,m|\nabla \varphi| )\,|\nabla \varphi|\,dx.
$$
Since $C_{1}(B_{\rho}(x_{0}), B_{8\rho}(x_{0}))\asymp\rho^{n-1}$, choosing $\varepsilon_{1}$
sufficiently small, from this we arrive at the required inequality.

Further we also need the following definition. Let $x_{0}\in \partial\Omega$, we say
that $x_{0}$ is a regular boundary point of the domain $\Omega$ for Eq. \eqref{eq1.1} if for
every bounded weak solution $u\in W(\Omega)$ of Eq. \eqref{eq1.1} satisfying the condition
$u-f\in  W_{0}(\Omega)$, $f\in C(\overline{\Omega})\cap W(\Omega)$, the following equality holds
$\lim\limits_{\Omega \ni x\rightarrow x_{0}}u(x)=f(x_{0})$.

Our next main result of this paper reads as follows:
\begin{theorem}\label{th1.2}
Let $u$ be a bounded weak solution to Eq. \eqref{eq1.1} and let $g(x,\cdot)$ be defined
in $\overline{\Omega}$. Fix $x_{0}\in \partial\Omega$, assume that condition
{\rm (${\rm g}_{1}$)} holds and
let condition {\rm (${\rm g}_{2}$)} be fulfilled in
$\overline{\Omega}$. Assume also that
\begin{equation}\label{eq1.9}
(\mathbf{A}(x, \xi)- \mathbf{A}(x, \eta))(\xi-\eta)>0,
\quad \xi,\eta\in \mathbb{R}^{n}, \quad \xi\neq\eta.
\end{equation}
If additionally
\begin{equation}\label{eq1.10}
\int_{0}\,
g_{x_{0}}^{-1}\left( \frac{C\big(B_{r}(x_{0})\setminus\Omega, B_{8r}(x_{0});\lambda(r)\big)}
{r^{n-1}} \right)dr=+\infty,
\end{equation}
then $x_{0}$ is a regular boundary point of the domain $\Omega$ for Eq. \eqref{eq1.1}.
Here we use the notation $g_{x_{0}}^{-1}({\rm v})$ for the inverse function to the function
$g({x_{0}},{\rm v})$.
\end{theorem}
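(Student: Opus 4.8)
The plan is to deduce the regularity of $x_{0}$ from a decay estimate for the oscillation of $u$ near $x_{0}$ in which the gain at each scale $r$ is measured by the capacity $C\bigl(B_{r}(x_{0})\setminus\Omega,\,B_{8r}(x_{0});\,\lambda(r)\bigr)$; this is the boundary counterpart of the interior estimate \eqref{eq1.7} of Theorem~\ref{th1.1}, with the density of a level set replaced by a capacity density.

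\textbf{Step 1: reduction.} Let $u$ be a bounded weak solution with $u-f\in W_{0}(\Omega)$, $f\in C(\overline\Omega)\cap W(\Omega)$, and put $M:=\esssup_{\Omega}|u|$. It suffices to prove $\limsup_{\Omega\ni x\to x_{0}}\bigl(u(x)-f(x_{0})\bigr)\le0$: the opposite inequality, hence $\lim_{\Omega\ni x\to x_{0}}u(x)=f(x_{0})$, follows on replacing $u$, $f$ and $\mathbf A(x,\xi)$ by $-u$, $-f$ and $\widetilde{\mathbf A}(x,\xi):=-\mathbf A(x,-\xi)$, which again satisfy \eqref{eq1.2} and \eqref{eq1.9} with the same constants. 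Fix $\varepsilon>0$. By continuity of $f$ there is $R_{\varepsilon}\in(0,R)$ with $|f(x)-f(x_{0})|<\varepsilon$ on $B_{R_{\varepsilon}}(x_{0})$, and since $u-f\in W_{0}(\Omega)$ the trace of $u$ on $\partial\Omega$ equals $f$. Consequently $v:=(u-f(x_{0})-\varepsilon)_{+}$, extended by $0$ on $B_{R_{\varepsilon}}(x_{0})\setminus\Omega$, is a nonnegative bounded weak subsolution of \eqref{eq1.1} in $B_{R_{\varepsilon}}(x_{0})$ (the gluing uses only $\mathbf A(x,0)=0$ and that $v$ has zero trace on $\partial\Omega\cap B_{R_{\varepsilon}}(x_{0})$), with $v\le2M$ and $v=0$ a.e.\ on $B_{r}(x_{0})\setminus\Omega$ for every $r<R_{\varepsilon}$; here the comparison principle, available thanks to \eqref{eq1.9}, is what lets one work with this extension and, later, compare $u$ with capacitary potentials of $B_{r}(x_{0})\setminus\Omega$. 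Thus the theorem reduces to showing $\esssup_{B_{\rho}(x_{0})}v\to0$ as $\rho\to0$.

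\textbf{Step 2: the capacitary decay lemma.} Set $\rho_{j}:=8^{-j}R_{\varepsilon}$ and $\mu_{j}:=\esssup_{B_{\rho_{j}}(x_{0})}v$. The core of the proof is the estimate: there exist $c_{*},C_{*}>0$, depending only on $K_{1},K_{2},n,p,q,c_{0},c,M$, such that for every $j$
\[
\mu_{j+1}\ \le\ \mu_{j}\,\Biggl(1-c_{*}\,\rho_{j}\,g_{x_{0}}^{-1}\!\Bigl(\frac{C\bigl(B_{\rho_{j}}(x_{0})\setminus\Omega,\,B_{8\rho_{j}}(x_{0});\,\lambda(\rho_{j})\bigr)}{\rho_{j}^{n-1}}\Bigr)\Biggr)_{+}+C_{*}\,\frac{\rho_{j}}{\lambda(\rho_{j})}.
\]
It is proved by a Caccioppoli inequality for $v$ written through $G(x,\cdot)$, followed by a De Giorgi level-set iteration in which the smallness of the bad level sets is bounded from below by $C\bigl(B_{\rho_{j}}(x_{0})\setminus\Omega,B_{8\rho_{j}}(x_{0});\lambda(\rho_{j})\bigr)$ (using $v=0$ on $B_{\rho_{j}}(x_{0})\setminus\Omega$, the Young-type inequality \eqref{Youngineq}, and the equivalence of the $1$-capacity with $\rho^{n-1}$). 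Since {\rm (${\rm g}_{2}$)} controls $g(x,\cdot)$ only for $r\le{\rm v}\le K\lambda(r)$, this forces both the normalization $m=\lambda(\rho_{j})$ inside the capacity and the error term $C_{*}\rho_{j}/\lambda(\rho_{j})$, and it requires splitting the iteration into the regime $\mu_{j}\lesssim\lambda(\rho_{j})$, where {\rm (${\rm g}_{2}$)} applies directly, and the complementary one, absorbed into the error. As a consistency check, when $B_{r}(x_{0})\setminus\Omega$ contains a fixed cone one has $C\bigl(B_{r}(x_{0})\setminus\Omega,B_{8r}(x_{0});\lambda(r)\bigr)\asymp r^{n-1}g(x_{0},\lambda(r)/r)$, so $\rho_{j}\,g_{x_{0}}^{-1}(\cdots)\asymp\lambda(\rho_{j})$ and the lemma degenerates into the interior estimate behind \eqref{eq1.7}.

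\textbf{Step 3: summation, and the main obstacle.} Because $r^{1-\delta_{0}}/\lambda(r)\to0$, one has $\rho_{j}/\lambda(\rho_{j})\le\rho_{j}^{\delta_{0}}$ for large $j$, so $\sigma_{j}:=C_{*}\sum_{i\ge j}\rho_{i}/\lambda(\rho_{i})\to0$; replacing $\mu_{j}$ by $\widehat\mu_{j}:=\mu_{j}+\sigma_{j}$ turns the lemma into $\widehat\mu_{j+1}\le\widehat\mu_{j}\bigl(1-c_{*}\gamma_{j}\bigr)_{+}$ with $\gamma_{j}:=\rho_{j}\,g_{x_{0}}^{-1}\!\bigl(C(B_{\rho_{j}}(x_{0})\setminus\Omega,B_{8\rho_{j}}(x_{0});\lambda(\rho_{j}))/\rho_{j}^{n-1}\bigr)$. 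Monotonicity of $r\mapsto C\bigl(B_{r}(x_{0})\setminus\Omega,B_{8r}(x_{0});\lambda(r)\bigr)$ and the bound $\lambda(r)\le(3/2)^{1-\delta_{0}}\lambda(r/2)$ give
\[
\sum_{j}\gamma_{j}\ \asymp\ \int_{0}g_{x_{0}}^{-1}\!\Bigl(\frac{C\bigl(B_{r}(x_{0})\setminus\Omega,\,B_{8r}(x_{0});\,\lambda(r)\bigr)}{r^{n-1}}\Bigr)\,dr\ =\ +\infty
\]
by hypothesis \eqref{eq1.10}; hence $\prod_{i}(1-c_{*}\gamma_{i})_{+}=0$, $\widehat\mu_{j}\to0$, and so $\mu_{j}\to0$. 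Therefore $\esssup_{\Omega\cap B_{\rho_{j}}(x_{0})}\bigl(u-f(x_{0})\bigr)\le\varepsilon+o(1)$, i.e.\ $\limsup_{\Omega\ni x\to x_{0}}\bigl(u(x)-f(x_{0})\bigr)\le\varepsilon$; letting $\varepsilon\downarrow0$ and invoking the symmetric bound from Step~1 completes the proof. The only substantial difficulty is the lemma of Step~2: one must run the De Giorgi iteration for the generalized-Orlicz operator under {\rm (${\rm g}_{1}$)}--{\rm (${\rm g}_{2}$)} alone, capture the effect of the vanishing set $B_{r}(x_{0})\setminus\Omega$ through precisely the capacity $C(\cdot\,;\lambda(r))$ of \eqref{eq1.10}, and track $\lambda$ sharply enough that the unavoidable error is of order $\rho/\lambda(\rho)$ and hence summable — making the capacity normalization and this error term compatible is exactly the ``non-logarithmic'' refinement over the case $\lambda\equiv1$, while Steps~1 and~3 are routine once the lemma is in hand.
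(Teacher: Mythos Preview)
Your overall architecture---reduce to a one-sided bound for an extended nonnegative subsolution, establish a one-step oscillation decay whose gain is the capacitary quantity in \eqref{eq1.10}, then iterate and sum---is indeed the shape of the paper's proof, and your Steps~1 and~3 are close to what the paper does (the paper argues by contradiction and selects a subsequence of ``good'' scales $\widetilde r_{j}$ with $\tau(\widetilde r_{j},\lambda(\widetilde r_{j}))\geqslant C_{4}\widetilde r_{j}$, but this is a minor repackaging of your summation).

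The substantive discrepancy is Step~2. The paper does \emph{not} obtain the one-step decay by a Caccioppoli/De\,Giorgi level-set iteration on the subsolution. Instead, at each scale it constructs the auxiliary solution $v=v(x,m)$ of \eqref{eq1.14}--\eqref{eq1.15} in $B_{\widetilde r_{j}}(x_{0})\setminus E_{j}$ with $E_{j}=B_{\widetilde r_{j}/4}(x_{0})\setminus\Omega$ and $m=M_{k}(\widetilde r_{j})\lambda(\widetilde r_{j})$, applies the two-sided bound of Theorem~\ref{th1.4} to get a pointwise lower estimate for $v$ on an annulus, and then uses the weak comparison principle (this is exactly where \eqref{eq1.9} enters) to transfer that bound to $M_{k}(\widetilde r_{j})-u_{k}^{+}$, yielding
\[
M_{k}(\widetilde r_{j})-M_{k}(\widetilde r_{j+1})\ \geqslant\ \eta\,\widetilde r_{j}\,g_{x_{0}}^{-1}\!\left(\frac{C\bigl(E_{j},\,B_{\widetilde r_{j}}(x_{0});\,M_{k}(\widetilde r_{j})\lambda(\widetilde r_{j})\bigr)}{\widetilde r_{j}^{\,n-1}}\right).
\]
This is Landis's growth-lemma method, and Theorem~\ref{th1.4} (proved independently in Section~\ref{Sec3}) is the engine. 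Your brief remark in Step~1 about ``comparing $u$ with capacitary potentials of $B_{r}(x_{0})\setminus\Omega$'' is precisely this idea, but in Step~2 you abandon it for a De\,Giorgi iteration that you only sketch.

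That is a genuine gap, not just a stylistic difference. The paper's introduction explains that the standard De\,Giorgi/Moser route to expansion of positivity in this setting produces an estimate of the form $\exp(-\gamma\beta^{-\overline c})$ and forces the cruder condition \eqref{eq1.13}; the whole point of routing through Theorem~\ref{th1.4} and the comparison principle is to avoid this loss and reach the sharp condition \eqref{eq1.10}. If you want to make your Step~2 work as written, you would have to show how the capacity $C\bigl(B_{r}(x_{0})\setminus\Omega,\,B_{8r}(x_{0});\lambda(r)\bigr)$ enters a De\,Giorgi iteration for the generalized Orlicz operator under (${\rm g}_{1}$)--(${\rm g}_{2}$) without that exponential blow-up---and that is exactly the hard part the paper circumvents rather than confronts directly.
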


It seems, that Theorem \ref{th1.2} is new even in the logarithmic case
(i.e. if $\lambda(r)=1$). In the case of the $p(x)$-Laplace equation and $\lambda(r)=1$
Theorem \ref{th1.2} was proved in \cite{AlkhSurnAlgAn19}. In the case $\lambda(r)=1$
and
$C_{G}(B_{r}(x_{0})\setminus\Omega, B_{8r}(x_{0}))
\geqslant\gamma^{-1}C_{G}(B_{8r}(x_{0}))\asymp g(x_{0},1/r)r^{n-1}$,
Theorem \ref{th1.2} was proved in \cite{HarHasZAn19}. Note also that the inequality
$$
C(B_{r}(x_{0})\setminus\Omega, B_{8r}(x_{0});\lambda(r))\geqslant
\gamma^{-1}C(B_{r}(x_{0}), B_{8r}(x_{0});\lambda(r))
\asymp g(x_{0},\lambda(r)/r)r^{n-1}
$$
implies that
$$
g^{-1}_{x_{0}}
\left( \frac{C(B_{r}(x_{0})\setminus\Omega, B_{8r}(x_{0});\lambda(r))}{r^{n-1}} \right)
\geqslant
\gamma^{-1}\frac{\lambda(r)}{r}.
$$
Therefore, in this case, condition
$\int\limits_{0} \lambda(r)r^{-1}\,dr=+\infty$
implies \eqref{eq1.10}.

The following example shows how the condition \eqref{eq1.10} can be rewritten for the
double-phase elliptic equation.
\begin{example}
{\rm
Let $g(x,\cdot)=g_{a(x)}(\cdot)$, consider separately two cases:
$a(x_{0})=0$ and $a(x_{0})>0$. First, if $a(x_{0})=0$, then evidently we have
\begin{multline*}
g_{x_{0}}^{-1}\left( \frac{C\big(B_{r}(x_{0})\setminus\Omega, B_{8r}(x_{0});\lambda(r)\big)}
{r^{n-1}} \right)
\\
=
\left( \frac{C\big(B_{r}(x_{0})\setminus\Omega, B_{8r}(x_{0});\lambda(r)\big)}
{r^{n-1}} \right)^{\frac{1}{p-1}}
\geqslant
\lambda(r) \left( \frac{C_{p}\big(B_{r}(x_{0})\setminus\Omega, B_{8r}(x_{0})\big)}
{r^{n-1}} \right)^{\frac{1}{p-1}}.
\end{multline*}
So, in the case $a(x_{0})=0$, condition
$$
\int\limits_{0}\lambda(r)
\left( \dfrac{C_{p}\big(B_{r}(x_{0})\setminus\Omega, B_{8r}(x_{0})\big)}
{r^{n-p}} \right)^{\frac{1}{p-1}} \dfrac{dr}{r}=+\infty
$$
implies \eqref{eq1.10}.

Now, let $a(x_{0})>0$, choose $\rho$ from the condition
$$
A\,\rho^{\alpha} \mu(\rho)=\frac{1}{2}\,a(x_{0}),
$$
this choice of $\rho$ guarantees that
$\frac{1}{2}\,a(x_{0})\leqslant a(x)\leqslant \frac{3}{2}\,a(x_{0})$ for $x\in B_{\rho}(x_{0})$.
In this case we have
$$
\frac{C\big(B_{r}(x_{0})\setminus\Omega, B_{8r}(x_{0});\lambda(r)\big)}
{r^{n-1}}\geqslant
\frac{1}{2}\,a(x_{0})\,\lambda^{q-1}(r)\,
\frac{C_{q}\big(B_{r}(x_{0})\setminus\Omega, B_{8r}(x_{0})\big)}
{r^{n-1}}.
$$
By (${\rm g}_{1}$) and the Young inequality
we obtain
$$
g_{a(x_{0})}\Bigg(\lambda(r)
\bigg(\frac{C_{q}\big(B_{r}(x_{0})\setminus\Omega, B_{8r}(x_{0})\big)}
{r^{n-1}} \bigg)^{\frac{1}{q-1}}\Bigg)\leqslant
1+\big(1+a(x_{0})\big)  \lambda^{q-1}(r)\,
\frac{C_{q}\big(B_{r}(x_{0})\setminus\Omega, B_{8r}(x_{0})\big)}
{r^{n-1}} ,
$$
and hence
$$
\lambda(r)
\bigg(\frac{C_{q}\big(B_{r}(x_{0})\setminus\Omega, B_{8r}(x_{0})\big)}
{r^{n-1}} \bigg)^{\frac{1}{q-1}}
\leqslant \gamma+\gamma g^{-1}_{x_{0}}
\bigg(\big(1+a(x_{0})\big)  \lambda^{q-1}(r)\,
\frac{C_{q}\big(B_{r}(x_{0})\setminus\Omega, B_{8r}(x_{0})\big)}
{r^{n-1}}\bigg).
$$
Therefore, in the case $a(x_{0})>0$, condition
$$
\int\limits_{0}\lambda(r)
\left( \frac{C_{q}\big(B_{r}(x_{0})\setminus\Omega, B_{8r}(x_{0})\big)}
{r^{n-q}} \right)^{\frac{1}{q-1}}\frac{dr}{r}=+\infty
$$
implies \eqref{eq1.10}.
}
\end{example}

Our next result is the Harnack inequality for positive bounded solutions to Eq.~\eqref{eq1.1}.
To formulate this result, we need another property that characterized equations with
non-logarithmic conditions.
\begin{itemize}
\item[(${\rm g}_{3}$)]
Fix $R>0$ such that $B_{R}(x_{0})\subset \Omega$.
We assume that for any $K>0$ there exists $C(K)>0$ and positive, continuous and non-increasing
function $\theta(r)$ on the interval $(0,R)$, $\theta(r)\geqslant1$,
$\lim\limits_{r\rightarrow0}r^{1-\delta_{0}}\theta(r)=0$, and
$\theta(r/2)\leqslant (3/2)^{1-\delta_{0}}\theta(r)$ with some $\delta_{0}\in (0,1)$,
such that
$$
g(x, {\rm v}/r)\leqslant C(K)\,\theta(r)\,g(y, {\rm v}/r)
$$
for any $x,y\in B_{r}(x_{0})\subset B_{R}(x_{0})$ and for all
$r\leqslant{\rm v}\leqslant K$.
\end{itemize}
\begin{remark}
{\rm
The function $g_{a(x)}(\cdot)$ satisfies condition (${\rm g}_{3}$) with
$$
\theta(r)=\begin{cases}
 1
& \text{if } \ q-p<\alpha\leqslant 1 \ \text{and} \ r^{\alpha+p-q}\mu(r)\leqslant1, \\[0pt]
 \mu(r)
& \text{if } \ \alpha=q-p.
\end{cases}
$$
Indeed, 
\begin{multline*}
g_{a(x)}({\rm v}/r)-g_{a(y)}({\rm v}/r)
\leqslant
Ar^{\alpha+p-q}\mu(r)\, {\rm v}^{\,q-p}\left( \frac{{\rm v}}{r} \right)^{p-1}
\\
\leqslant
AK^{q-p}\,\theta(r)\left( \frac{{\rm v}}{r} \right)^{p-1} \leqslant
AK^{q-p}\,\theta(r)\, g_{a(y)}({\rm v}/r), \ \
\text{if} \ r\leqslant {\rm v}\leqslant K. \hskip 9mm
\end{multline*}

Similarly, the function $g_{b(x)}(\cdot)$ satisfies condition (${\rm g}_{3}$) with
$\theta(r)=\ln\mu(r)$, indeed
$$
\begin{aligned}
g_{b(x)}({\rm v}/r)-g_{b(y)}({\rm v}/r)
&\leqslant \left( \frac{{\rm v}}{r} \right)^{p-1}
\ln\left( 2+|b(x)-b(y)|\,\frac{{\rm v}}{r} \right)
\\
&\leqslant
\left( \frac{{\rm v}}{r} \right)^{p-1}\ln\big(2+BK\mu(r)\big)
\\
&\leqslant \gamma(B,K) \left( \frac{{\rm v}}{r} \right)^{p-1}\ln\mu(r)
\leqslant \gamma(B,K)\ln\mu(r)\, g_{b(y)}({\rm v}/r),
\end{aligned}
$$
if $R$ is small enough.

Obviously, the function $g_{c(x)}(\cdot)$ satisfies condition (${\rm g}_{3}$)
with $\theta(r)=\ln^{\beta}\dfrac{1}{r}$.
}
\end{remark}
\begin{theorem}\label{th1.3}
Let $u$ be a bounded non-negative weak solution to Eq. \eqref{eq1.1}.
Fix $x_{0}\in\Omega$ and assume that conditions
{\rm (${\rm g}_{1}$)}, \eqref{eq1.8}, \eqref{eq1.9} hold,
and let conditions {\rm (${\rm g}_{2}$)}, {\rm (${\rm g}_{3}$)}
be fulfilled in some ball $B_{R}(x_{0})\subset\Omega$.
Assume also that
$$
\int\limits_{0}
\frac{\theta^{-\frac{2n}{p-1}}(r)}{\lambda(r)}\,\frac{dr}{r}=+\infty.
$$
Then there exists positive number $C_{2}$ depending only on
$n$, $p$, $q$, $K_{1}$, $K_{2}$, $c_{0}$, $c$, $C(M)$ such that either
\begin{equation}\label{eq1.11}
u(x_{0})\leqslant C_{2}\, \rho\, \frac{\,\theta^{\frac{2n}{p-1}}(\rho)}{\lambda(\rho)},
\end{equation}
or
\begin{equation}\label{eq1.12}
u(x_{0})\leqslant  C_{2} \, \frac{\theta^{\frac{2n}{p-1}}(\rho)}{\lambda(\rho)}
\inf\limits_{B_{\rho}(x_{0})}u,
\end{equation}
for all $0<\rho<R/8$.
\end{theorem}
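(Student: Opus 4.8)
The plan is to run a De Giorgi--Nash--Moser scheme adapted to the generalized Orlicz structure and carried out in the intrinsic geometry dictated by $\lambda$, the essential novelty being the careful tracking of how the defect functions $\lambda$ and $\theta$ propagate through the iteration. First I would reduce to a nondegenerate situation: since \eqref{eq1.8} holds, Theorem~\ref{th1.1} shows that $u$ is continuous at $x_{0}$, so $N:=u(x_{0})$ is well defined and we may assume $N>0$; if $N\le C_{2}\,\rho\,\theta^{2n/(p-1)}(\rho)/\lambda(\rho)$ then \eqref{eq1.11} holds and there is nothing to prove, so I assume the reverse inequality, with $C_{2}$ to be fixed large at the end. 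This lower bound on $N$, together with the a~priori bound $M:=\esssup_{\Omega}|u|$, is exactly what forces the relevant truncation levels to lie in the window $r\le{\rm v}\le K\lambda(r)$ with $K\asymp M$, so that conditions (${\rm g}_{2}$) and (${\rm g}_{3}$) become available after rescaling $u$ by the intrinsic factor $\sim\lambda(r)/r$. With this in hand I would derive the Caccioppoli inequalities for the truncations $(u-k)_{\pm}$ in terms of $G(x,\cdot)=g(x,\cdot)\,(\cdot)$, which are the basic tool for everything that follows.

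Next I would prove the two De Giorgi-type lemmas on which the Harnack inequality rests. The first is a local sup bound for subsolutions: iterating the Caccioppoli inequality over a sequence of shrinking balls centred at $x_{0}$, using (${\rm g}_{1}$) to squeeze $g(x,\cdot)$ between the powers ${\rm v}^{p-1}$ and ${\rm v}^{q-1}$ and using (${\rm g}_{2}$)--(${\rm g}_{3}$) to replace $g(x,\cdot)$ by $g(x_{0},\cdot)$ at the cost of a factor $\theta(r)$, one obtains $\esssup_{B_{\rho/2}(x_{0})}u\le\gamma\,\dashint_{B_{\rho}(x_{0})}u\,dx+\gamma\,\rho\,\theta^{\gamma}(\rho)/\lambda(\rho)$, the doubling property $\theta(r/2)\le(3/2)^{1-\delta_{0}}\theta(r)$ keeping the accumulated constants under control. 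The second is an expansion-of-positivity statement for supersolutions: if $|\{x\in B_{2r}(x_{0}):u(x)\ge k\}|\ge\tfrac12|B_{2r}(x_{0})|$, then $\essinf_{B_{r}(x_{0})}u\ge\delta(r)\,k-\gamma\,r/\lambda(r)$ with a gain factor $\delta(r)\asymp\theta^{-\gamma}(r)$ that degenerates as $r\to0$. Bridging the two is a logarithmic (De Giorgi) lemma, which estimates the measure of the set where $\ln(k/(u+d))$ is large and upgrades ``$u$ positive on a set of fixed density'' to ``$u$ positive on a set of density close to $1$''; here the analogue of the Young inequality \eqref{Youngineq} and the relation $C(B_{\rho}(x_{0}),B_{8\rho}(x_{0});m)\asymp\rho^{n-1}g(x_{0},m/\rho)$ are used to pass between integral averages of $u$ and expressions involving $g(x_{0},\cdot)$.

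With these tools I would run the Harnack chain. By the oscillation estimate \eqref{eq1.7}, the hypothesis that \eqref{eq1.11} fails guarantees that $u\ge N/2$ on a ball $B_{r_{*}}(x_{0})$ whose radius is controlled from below; starting from there I would propagate the lower bound outward over the dyadic scales $r_{j}$ from $r_{*}$ up to a level comparable to $\rho$, each step contributing a multiplicative factor $\delta(r_{j})\asymp\theta^{-\gamma}(r_{j})$ and an additive error $\gamma\,r_{j}/\lambda(r_{j})$. Summing the geometrically decaying errors and multiplying the gain factors, the doubling inequalities for $\lambda$ and $\theta$ collapse the product and the sum into the single-scale quantity $\theta^{2n/(p-1)}(\rho)/\lambda(\rho)$, and the assumption $\int_{0}\theta^{-2n/(p-1)}(r)\lambda^{-1}(r)\,dr/r=+\infty$ is precisely what prevents the propagation from collapsing before it reaches $B_{\rho}(x_{0})$; the additive errors are absorbed into the alternative \eqref{eq1.11}. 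Combining the resulting lower bound for $\essinf_{B_{\rho}(x_{0})}u$ with the local sup bound applied along the same chain, so that $u(x_{0})$ is controlled by an average of $u$, then yields \eqref{eq1.12}. Condition \eqref{eq1.9} enters through the comparison principle it provides, which is needed at the expansion-of-positivity step.

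The main obstacle is the bookkeeping in the intrinsic geometry. One has to choose the rescaling $v(y)=\tfrac{r}{\lambda(r)}\,u(x_{0}+ry)$ and the truncation levels so that the argument of $g$ never leaves the window $[r,K\lambda(r)]$ in which (${\rm g}_{2}$) and (${\rm g}_{3}$) may be applied with $K\asymp M$---this is the precise point at which the failure of \eqref{eq1.11} is consumed---and then to follow how the factors $\theta(r)$ and $\lambda(r)$ accumulate through the sup iteration, the logarithmic lemma and the Harnack chain so that they collapse to exactly $\theta^{2n/(p-1)}(\rho)/\lambda(\rho)$, the exponent $2n/(p-1)$ (and nothing larger) arising from combining the number of De Giorgi steps needed in the measure-shrinking lemma with the Sobolev embedding used in its $p$-homogeneous form. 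Once the accounting is done correctly, the divergence hypothesis on $\theta^{-2n/(p-1)}/\lambda$ does the rest.
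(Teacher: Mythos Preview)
Your Harnack-chain strategy is precisely the approach the paper sets out to \emph{avoid}. As the introduction explains (see the paragraph before Theorem~\ref{th1.4}), once the positivity set has density only $\beta(r)\asymp\nu\,\theta^{-2n}(r)$---which is all the De~Giorgi lemma under (${\rm g}_{3}$) delivers---the standard De~Giorgi/Moser expansion of positivity produces a gain no better than $e^{-\gamma\beta(r)^{-\bar c}}$, i.e.\ an \emph{exponential} loss in $\theta$. Your claim that ``the doubling inequalities for $\lambda$ and $\theta$ collapse the product'' of per-scale factors $\delta(r_j)\asymp\theta^{-\gamma}(r_j)$ into the single-scale quantity $\theta^{2n/(p-1)}(\rho)/\lambda(\rho)$ is unsupported: the doubling bound $\theta(r/2)\le(3/2)^{1-\delta_0}\theta(r)$ limits the growth of $\theta$ but does nothing to make a product $\prod_j\theta^{-\gamma}(r_j)$ telescope; the number of steps enters multiplicatively and the outcome is condition~\eqref{eq1.13}, not the polynomial estimate in Theorem~\ref{th1.3}. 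The divergence hypothesis on $\theta^{-2n/(p-1)}/\lambda$ plays no role of the sort you describe.

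The paper's proof does not iterate over scales. It uses the Krylov--Safonov touching argument to locate $\bar x\in B_{\rho\tau_0}(x_0)$ with $u(\bar x)=u_0\,(1-\tau_0)^{-l_1}\bigl(\theta((1-\tau_0)\rho)/\theta(\rho)\bigr)^{l_2}/2$, then applies the De~Giorgi lemma under (${\rm g}_{3}$) once to obtain $|E|\ge\nu\,\theta^{-2n}(r)\,|B_r(\bar x)|$ for $E=\{u\ge\tfrac12 u(\bar x)\lambda(\rho)\}\cap B_r(\bar x)$, $r=(1-\tau_0)\rho/2$. The decisive new ingredient is Theorem~\ref{th1.4}: one constructs the auxiliary solution $v=v(x,m)$ of \eqref{eq1.14}--\eqref{eq1.15} with $m=\tfrac12 u(\bar x)\lambda(\rho)$ in $B_{4\rho}(\bar x)\setminus E$, verifies via a Poincar\'e inequality that $C(E,B_{4\rho}(\bar x);m)\ge C_3\rho^{n-1}$ (this computation is what fixes $l_1=n/(p-1)$, $l_2=2n/(p-1)$), and then reads off from \eqref{eq1.16} that $v\ge\gamma^{-1}\theta^{-2n/(p-1)}(\rho)\lambda(\rho)\,u_0$ on $B_{2\rho}(\bar x)\setminus B_\rho(\bar x)$. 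Condition \eqref{eq1.9} enters solely through the weak comparison principle (Lemma~\ref{lem4.1pr1}), giving $u\ge v$ and hence the bound on $\inf_{B_\rho(x_0)}u$. There is no logarithmic lemma, no sup-bound iteration, and no chain; the polynomial dependence on $\theta$ comes directly from the capacity estimate, which is the whole point of Landis's method here.
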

\begin{remark}
{\rm
We note that for the case $g(x,\cdot)=g_{a(x)}(\cdot)$ and
$q-p<\alpha\leqslant1$, $\mu(\rho)=\ln^{\beta}\dfrac{1}{\rho}$, $\beta>0$,
$\lambda(\rho)=\theta(\rho)=1$, inequalities \eqref{eq1.11}, \eqref{eq1.12}
can be rewritten as
$$
u(x_{0})\leqslant C_{2}\left(\inf\limits_{B_{\rho}(x_{0})}u+\rho\right).
$$
We also note that in the case $g(x,\cdot)=g_{a(x)}(\cdot)$ and
$\alpha=q-p$,
$\mu(\rho)=\ln^{\beta}\dfrac{1}{\rho}$,
$\lambda(\rho)=\ln^{-\frac{\beta}{q-p}}\dfrac{1}{\rho}$,
$\theta(\rho)=\ln^{\beta}\dfrac{1}{\rho}$,
$0\leqslant\beta\leqslant \dfrac{q-p}{1+\frac{2n}{p-1}(q-p)}$,
inequalities \eqref{eq1.11}, \eqref{eq1.12}
translates into
$$
u(x_{0})\leqslant  C_{2}
\left( \ln\frac{1}{\rho}\, \inf\limits_{B_{\rho}(x_{0})}u
+\rho\ln\frac{1}{\rho} \right).
$$

In addition, note that in the case $g(x,\cdot)=g_{b(x)}(\cdot)$ and
$\mu(\rho)=\ln^{\beta}\dfrac{1}{\rho}$, $\lambda(\rho)=\ln^{-\beta}\dfrac{1}{\rho}$,
$\beta\in(0,1)$, $\theta(\rho)=\beta\ln\ln\dfrac{1}{\rho}$,
inequalities \eqref{eq1.11} and \eqref{eq1.12} can be rewritten as
$$
u(x_{0})\leqslant  C_{2}
\left( \ln\frac{1}{\rho}\, \inf\limits_{B_{\rho}(x_{0})}u
+\rho\ln\frac{1}{\rho} \right).
$$
We also note that similar inequality holds in the case
$g(x,\cdot)=g_{c(x)}(\cdot)$ and $\mu(\rho)=\ln^{\beta_{1}}\dfrac{1}{\rho}$,
$\lambda(\rho)=\ln^{-\frac{\beta+\beta_{1}}{q-p}}\dfrac{1}{\rho}$,
$\theta(\rho)=\ln^{\beta}\dfrac{1}{\rho}$,
$0\leqslant\beta_{1}+\beta \left( 1+\dfrac{2n}{p-1}(q-p)\right)
\leqslant q-p$.

These conditions on $\mu(\rho)$ are worse than in Theorems \ref{th1.1} and \ref{th1.2},
but they are much better than condition, which was known earlier (see \eqref{eq1.13} below).
}
\end{remark}

Before describing the method of proof, we say few words
concerning the history of the problem. The study of regularity of minima
of functionals with non-standard growth has been initiated by Zhikov
\cite{ZhikIzv1983, ZhikIzv1986, ZhikJMathPh94, ZhikJMathPh9798, ZhikKozlOlein94},
Marcellini \cite{Marcellini1989, Marcellini1991}, and Lieberman \cite{Lieberman91},
and in the last thirty years, the qualitative theory
of second order elliptic and parabolic equations with so-called log-condition
(i.e. if $\lambda(r)=1$ and $0<\theta(r)\leqslant L<+\infty$) has been actively developed
(see e.g.
\cite{Alhutov97, AlhutovKrash04, BenHarHasKarp20, BurchSkrPotAn, DienHarHastRuzVarEpn,
HarHastOrlicz, HarHasZAn19, HarHastLee18, HarHastToiv17, SkrVoitUMB19, SkrVoitNA20, VoitNA19}
for references).
Equations of this type and systems of such equations arise in various
problems of mathematical physics (see e.g. the monographs
\cite{AntDiazShm2002_monogr, HarHastOrlicz, Ruzicka2000, Weickert} and references therein).

Double-phase elliptic equations under the logarithmic condition were studied by
Colombo, Mingione \cite{ColMing218, ColMing15, ColMingJFnctAn16}
and by Baroni, Colombo, Mingione \cite{BarColMing, BarColMingStPt16, BarColMingCalc.Var.18}.
Particularly,
$C^{0,\beta}_{{\rm loc}}(\Omega)$, $C^{1,\beta}_{{\rm loc}}(\Omega)$ and Harnack's
inequality were obtained in the case $\lambda(r)=1$ and $\theta(r)=1$ under the precise
conditions on the parameters $\alpha$, $p$, $q$.

The case when conditions (${\rm g}_{2}$) or (${\rm g}_{3}$) hold differs substantionally from the
logarithmic case. To our knowledge there are few results in this direction.
Zhikov \cite{ZhikPOMI04}
obtained a generalization of the logarithmic condition which guarantees the density
of smooth functions in Sobolev space $W^{1,p(x)}(\Omega)$.  Particularly,
this result holds if $1<p\leqslant p(x)$ and
$$
|p(x)-p(y)|\leqslant L\,
\frac{ \Big|\ln \big|\ln |x-y|\big| \Big|}{\big|\ln |x-y|\big|},
\quad x,y\in\Omega, \quad x\neq y, \quad 0<L<p/n.
$$
Later Zhikov and Pastukhova \cite{ZhikPast2008MatSb}
under the same condition proved higher integrability
of the gradient of solutions to the $p(x)$-Laplace equation.
Interior continuity, continuity up to the boundary and Harnack's inequality
to the $p(x)$-Laplace equation were proved by Alkhutov, Krasheninnikova \cite{AlhutovKrash08},
Alkhutov, Surnachev \cite{AlkhSurnAlgAn19} and Surnachev \cite{SurnPrepr2018}
under the condition (${\rm g}_{3}$), where $\theta(r)$ satisfies
\begin{equation}\label{eq1.13}
\int\limits_{0} \exp\big( -\gamma\,\theta^{c}(r) \big)\,\frac{dr}{r}=+\infty,
\end{equation}
with some positive constants $\gamma$, $c>1$. Particularly, the function
$\theta(r)=\left(\ln\ln \dfrac{1}{r}\right)^{L}$, $L<\dfrac{1}{c}$ satisfies the above
condition. These results were generalized in \cite{SkrVoitNA20, ShSkrVoit20}
for a wide class of elliptic and parabolic equations with non-logarithmic Orlicz growth.
Particularly, it was proved in \cite{SkrVoitNA20} that under conditions (${\rm g}_{1}$),
(${\rm g}_{3}$) and \eqref{eq1.13} functions from the corresponding
De\,Giorgi's $\mathcal{B}_{1}(\Omega)$ classes are continuous and moreover, it was shown
that the solutions of the correspondent elliptic and parabolic equations with non-standard
growth belong to these classes.

In the present paper, we substantionally refine the results of \cite{SkrVoitNA20, ShSkrVoit20}.
We would like to mention the approach taken in this paper. To prove interior continuity we use
De\,Giorgi's approach. We consider De\,Giorgi's $\mathcal{B}_{1}(\Omega)$ classes and we prove
the continuity of the functions belonging to these classes. The main difficulty arising in the
proof of the main results is related to the so-called theorem on the expansion of positivity.
Roughly speaking, having information on the measure of the "positivity set" of $u$ over the ball
$B_{r}(\overline{x})$:
$$
\left| \left\{ x\in B_{r}(\overline{x}):
u(x)\leqslant N \right\} \right| \leqslant
\big(1-\beta(r)\big)\, |B_{r}(\overline{x})|
$$
with some $r>0$, $N>0$ and $\beta:=\beta(r)\in(0,1)$, and using the standard De\,Giorgi's or
Moser's arguments, we inevitably come to the estimate
$$
u(x)\geqslant \gamma^{-1}e^{-\gamma \beta^{-\overline{c}}(r)}N, \ \ x\in B_{2r}(\overline{x}),
$$
with some $\gamma$, $\overline{c}>1$. This estimate leads us to condition \eqref{eq1.13}
(see, e.g. \cite{ShSkrVoit20, SkrVoitNA20}). To avoid this, we use a workaround that goes back
to Landis's papers \cite{Landis_uspehi1963, Landis_mngrph71} and his so-called "growth" lemma.
So, we use the following auxiliary solutions. Fix $x_{0}\in\mathbb{R}^{n}$ and let
$E\subset B_{r}(x_{0})\subset B_{\rho}(x_{0})$ and consider the solution
$v:=v(x,m)$ of the following problem:
\begin{equation}\label{eq1.14}
{\rm div} \mathbf{A}(x, \nabla v)=0, \quad x\in \mathcal{D}:=B_{8\rho}(x_{0})\setminus E,
\end{equation}
\begin{equation}\label{eq1.15}
v-m\psi\in W_{0}(\mathcal{D}),
\end{equation}
where $0< m\leqslant \lambda(8\rho)$ is some fixed number, and $\psi\in W_{0}(B_{8\rho}(x_{0}))$,
$\psi=1$ on $E$.

The main step in the proof of Theorems \ref{th1.2} and \ref{th1.3} is the following theorem.
\begin{theorem}\label{th1.4}
Let $v$ be a solution to \eqref{eq1.14}, \eqref{eq1.15} and let conditions {\rm (${\rm g}_{1}$)},
{\rm (${\rm g}_{2}$)} be fulfilled. Then there exists a
positive constant $C_{3}$ depending only on $n$, $p$, $q$, $K_{1}$, $K_{2}$, $c_{0}$, $c$ and
$M$ such that either
\begin{equation}\label{eq1.14pr1}
C(E, B_{8\rho}(x_{0});m)\leqslant C_{3}\rho^{n-1},
\end{equation}
or
\begin{equation}\label{eq1.16}
C_{3}^{-1} g_{x_{0}}^{-1}\left( \frac{C(E, B_{8\rho}(x_{0});m)}{\rho^{n-1}} \right)
\rho \leqslant v(x) \leqslant C_{3}\,
g_{x_{0}}^{-1}\left( \frac{C(E, B_{8\rho}(x_{0});m)}{\rho^{n-1}} \right)\rho,
\end{equation}
for a.a. $x\in B_{2\rho}(x_{0})\setminus B_{\rho}(x_{0})$.
\end{theorem}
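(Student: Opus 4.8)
The plan is to establish the two-sided bound \eqref{eq1.16} by comparing the auxiliary solution $v$ with suitable barriers built from capacitary potentials, using the characterization $C(B_\rho(x_0),B_{8\rho}(x_0);m)\asymp \rho^{n-1}g(x_0,m/\rho)$ established in the excerpt and the structure conditions \eqref{eq1.2}. First I would record elementary consequences of $v-m\psi\in W_0(\mathcal D)$: by the maximum principle (which follows from \eqref{eq1.9} and a standard Stampacchia truncation argument, since $m\psi$ is an admissible comparison function) one gets $0\le v\le m$ in $\mathcal D$, and $v=m$ on $E$ in the trace sense. The quantity $\mathcal C:=C(E,B_{8\rho}(x_0);m)/\rho^{n-1}$ and the scale $s:=g_{x_0}^{-1}(\mathcal C)\,\rho$ will play the role of the "natural oscillation of $v$ on the annulus"; note $s\le \gamma\,m$ because of the monotonicity of $g(x_0,\cdot)$ together with the trivial upper bound $C(E,B_{8\rho};m)\le C(B_\rho,B_{8\rho};m)\asymp\rho^{n-1}g(x_0,m/\rho)$, so that the barrier scales are compatible with the boundedness of $v$.

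For the lower bound in \eqref{eq1.16} I would test the weak formulation of \eqref{eq1.14} with $\varphi=(s-v)_+\,\zeta^q$ (or a De Giorgi level truncation at level $\sim s$) and run a Caccioppoli/energy estimate on $B_{2\rho}(x_0)\setminus B_\rho(x_0)$, using condition (${\rm g}_1$) to pass between $g(x,\cdot)$ at different arguments, and condition (${\rm g}_2$) with $K$ controlled by $M$ to freeze the $x$-dependence of $g$ at $x_0$ (this is exactly where one needs $m\le\lambda(8\rho)$, so that the relevant gradients stay in the range $r\le {\rm v}\le K\lambda(r)$ where (${\rm g}_2$) applies). The point is that the total energy of $v$ is controlled by the capacity: testing the equation with $\varphi=m\psi-v$ gives $\int_{\mathcal D}\mathbf A(x,\nabla v)\nabla v\,dx\le \int_{\mathcal D}\mathbf A(x,\nabla v)\,m\nabla\psi\,dx$, and a Young-type inequality \eqref{Youngineq} plus the definition of $C(E,B_{8\rho};m)$ bounds $\int_{\mathcal D}g(x,|\nabla v|)|\nabla v|\,dx$ from above by $\gamma\,C(E,B_{8\rho}(x_0);m)$. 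Combining this energy bound with a De Giorgi iteration on the annulus, and using that $v$ attains the value $m$ on a set of positive capacity inside $B_\rho$, forces $v$ to stay above $\gamma^{-1}s$ on $B_{2\rho}\setminus B_\rho$ — unless the capacity is so small that the iteration degenerates, which is precisely the alternative \eqref{eq1.14pr1}.

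For the upper bound $v(x)\le C_3\,s$ on the annulus I would use a comparison principle against the capacitary potential: let $w$ be the $\mathbf A$-capacitary potential of $B_\rho(x_0)$ in $B_{8\rho}(x_0)$ with boundary datum scaled so that $w\ge m$ on $\overline{B_\rho}$; by \eqref{eq1.9} and $v\le m$ one gets $v\le w$ on $\mathcal D$, and then one estimates $w$ pointwise on the annulus by a Harnack-type / oscillation argument for the single equation \eqref{eq1.1}, which again reduces (via (${\rm g}_2$) and (${\rm g}_1$)) to the $p$-Laplacian-like bound $w(x)\le \gamma\, g_{x_0}^{-1}\big(C(\cdots)/\rho^{n-1}\big)\rho$. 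The main obstacle, and the step I expect to be most delicate, is the lower bound: one must convert "positive capacity of $E$" into a genuine pointwise lower bound for $v$ on the annulus $B_{2\rho}\setminus B_\rho$ without picking up an exponentially bad dependence on $\beta$ as in the older approach leading to \eqref{eq1.13}. This is the Landis "growth lemma" mechanism alluded to in the introduction: the quantitative control has to come from the capacity directly through the energy identity $\int g(x,|\nabla v|)|\nabla v|\,dx\asymp C(E,B_{8\rho};m)$ combined with a weak Harnack / measure-to-pointwise estimate for $v$ restricted to the annulus, rather than from iterating a density estimate. Carefully tracking that all constants depend only on $n,p,q,K_1,K_2,c_0,c,M$ — in particular that the freezing constant from (${\rm g}_2$) is harmless because the gradients of both $v$ and the barriers live below $K\lambda$ with $K\sim M$ — is the bookkeeping that makes the two alternatives \eqref{eq1.14pr1}–\eqref{eq1.16} come out clean.
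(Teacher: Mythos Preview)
Your plan for the upper bound has a real gap. Comparing $v$ with the $\mathbf A$-capacitary potential $w$ of the \emph{full ball} $B_\rho(x_0)$ in $B_{8\rho}(x_0)$ cannot recover the sharp estimate $v\leqslant C_3\, g_{x_0}^{-1}\big(C(E,B_{8\rho};m)/\rho^{n-1}\big)\rho$: the barrier $w$ knows nothing about $E$, only about $B_\rho$, so any pointwise bound you extract for $w$ on the annulus will involve $C(B_\rho,B_{8\rho};m)\asymp\rho^{n-1}g(x_0,m/\rho)$ and therefore collapse to the trivial bound $v\leqslant\gamma m$. The whole point of the upper estimate in \eqref{eq1.16} is that when $E$ is thin (small capacity), $v$ is \emph{much smaller} than $m$ on the annulus; a barrier that equals $m$ on all of $\overline{B_\rho}$ cannot detect this.

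The paper obtains the upper bound by a completely different, direct argument: an iterated local-maximum estimate over nested annuli $F_i=\{(1+2^{-i})\rho/4\leqslant|x-x_0|\leqslant(3-2^{-i})\rho/2\}$. One runs a De\,Giorgi sup-iteration on small balls inside $F_i$ to get $M_i\leqslant \varepsilon M_{i+1}+\text{(average of $v$ over $F_{i+1}$)}+\gamma\rho$, and the crucial step is to bound that average by the capacity of $E$. This is done via the test function $\varphi=v_{M_{i+1}}-\dfrac{M_{i+1}}{m}\,v$, where $v_{M_{i+1}}=\min\{v,M_{i+1}\}$, which yields
\[
\int_{\mathcal D}G(x,|\nabla v_{M_{i+1}}|)\,dx\leqslant \gamma\,\frac{M_{i+1}}{m}\int_{\mathcal D}G(x,|\nabla v|)\,dx\leqslant \gamma\,M_{i+1}\,C(E,B_{8\rho}(x_0);m),
\]
the last inequality coming from testing with $v-m\psi$ as you indicated. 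Feeding this back and iterating in $i$ produces $g(x_0,M_0/\rho)\leqslant\gamma\,C(E,B_{8\rho};m)/\rho^{n-1}$. That truncated test function is the missing idea in your sketch; without it the upper bound does not go through.

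For the lower bound your outline is closer in spirit but still misses the mechanism. Testing with $(s-v)_+\zeta^q$ and running De\,Giorgi does not by itself produce the needed initial measure information. The paper instead (i) uses the definition of capacity to get the \emph{lower} energy bound $m\,C(E,B_{8\rho};m)\leqslant\int_{\mathcal D}G(x,|\nabla v|)\,dx$, (ii) localizes this energy to the annulus $K_{\rho/4,2\rho}$ by testing with $v-m\zeta_1^q$ and $v\zeta_2^q$, obtaining $C(E,B_{8\rho};m)\leqslant\gamma\varepsilon_1^{-1}\int_{K_{\rho/4,2\rho}}G(x_0,v/\rho)\,dx$, (iii) splits the annulus into $\{v\leqslant\varepsilon s\}$ and $\{v>\varepsilon s\}$ and uses the already-proved upper bound to conclude that the ``good'' set has measure $\geqslant\vartheta|K_{\rho/4,2\rho}|$, and (iv) applies the expansion-of-positivity Lemma~\ref{lem2.2pr1} to pass from this measure estimate to the pointwise lower bound. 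Note in particular that the lower bound \emph{uses} the upper bound, so the order matters.
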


The rest of the paper contains the proof of the above theorems.

In Section \ref{Sec2} we consider elliptic $\mathcal{B}_{1}(\Omega)$ classes and prove
interior continuity of functions from these classes. Section \ref{Sec3} contains the upper
and lower bounds for auxiliary solutions of \eqref{eq1.14}, \eqref{eq1.15}.
In Section \ref{Sec4} -- continuity up to the boundary and Harnack's inequality,
proof of Theorems \ref{th1.2}, \ref{th1.3}. We also note that our proofs do not require
studying the special properties of Orlicz spaces.
Finally, we note that in the proof of the main results we do not distinguish between the cases
of so-called $p$-growth (i.e. if $a(x)=0$ or $b(x)=0$) and $(p,q)$-growth
(i.e. if $a(x)>0$ or $b(x)>0$). Moreover, it seems that even in the case $\mu(r)=1$
Eqs. \eqref{eq1.1} with $g(x,\cdot)=g_{b(x)}(\cdot)$ and $g(x,\cdot)=g_{c(x)}(\cdot)$
are considered here for the first time.



\section{Elliptic $\mathcal{B}_{1}$ classes and local continuity.
Proof of Theorem \ref{th1.1}}\label{Sec2}

We refer to the parameters $K_{1}$, $K_{2}$, $n$, $p$, $q$, $c$, $c_{0}$ and $M$ as our structural
data, and we write $\gamma$ if it can be quantitatively determined a priory in terms of the above
quantities. The generic constant $\gamma$ may change from line to line.

As it was already mentioned, elliptic $\mathcal{B}_{1}$ classes were practically defined in
\cite{SkrVoitNA20}.
\begin{definition}
{\rm
We say that a measurable function $u:\Omega\rightarrow \mathbb{R}$ belongs to the elliptic class
$\mathcal{B}_{1,g}(\Omega)$ if $u\in W^{1,1}_{{\rm loc}}(\Omega)\cap L^{\infty}(\Omega)$,
$\esssup\limits_{\Omega} |u|\leqslant M$ and there exists numbers $1<p<q$, $c_{1}>0$ such that for
any ball $B_{8r}(x_{0})\subset\Omega$, any $k$, $l\in \mathbb{R}$, $k<l$, $|k|$, $|l|<M$, any
$\varepsilon\in(0,1]$, any $\sigma\in(0,1)$, for any $\zeta\in C_{0}^{\infty}(B_{r}(x_{0}))$,
$0\leqslant\zeta\leqslant1$, $\zeta=1$ in $B_{r(1-\sigma)}(x_{0})$,
$|\nabla \zeta|\leqslant(\sigma r)^{-1}$, the following inequalities hold:
\begin{multline}\label{eq2.1pr1}
\int\limits_{A^{+}_{k,r}\setminus A^{+}_{l,r}}
g\left( x, \frac{M_{+}(k,r)}{r} \right)
|\nabla u|\,\zeta^{\,q}\,dx
\leqslant
c_{1}\, \frac{M_{+}(k,r)}{\varepsilon\, r}\,
\int\limits_{A^{+}_{k,r}\setminus A^{+}_{l,r}}
g\left( x, \frac{M_{+}(k,r)}{r} \right)dx
\\
+  c_{1}\,\sigma^{-q}\varepsilon^{\,p-1}
\int\limits_{A^{+}_{k,r}} g\left( x, \frac{M_{+}(k,r)}{r} \right)
\frac{(u-k)_{+}}{r}\,dx,
\end{multline}
\begin{multline}\label{eq2.2pr1}
\int\limits_{A^{-}_{l,r}\setminus A^{-}_{k,r}}
g\left( x, \frac{M_{-}(l,r)}{r} \right)
|\nabla u|\,\zeta^{\,q}\,dx
\leqslant
c_{1}\, \frac{M_{-}(l,r)}{\varepsilon\, r}\,
\int\limits_{A^{-}_{l,r}\setminus A^{-}_{k,r}}
g\left( x, \frac{M_{-}(l,r)}{r} \right)dx
\\
+  c_{1}\,\sigma^{-q}\varepsilon^{\,p-1}
\int\limits_{A^{-}_{l,r}} g\left( x, \frac{M_{-}(l,r)}{r} \right)
\frac{(u-l)_{-}}{r}\,dx,
\end{multline}
here $(u-k)_{\pm}:=\max\{\pm(u-k), 0\}$,
$A^{\pm}_{k,r}:=B_{r}(x_{0})\cap \{(u-k)_{\pm}>0\}$,
$M_{\pm}(k,r):=\esssup\limits_{B_{r}(x_{0})} (u-k)_{\pm}$.
}
\end{definition}

Our main result of this Section reads as follows:
\begin{theorem}\label{th2.1pr1}
Let $u\in \mathcal{B}_{1,g}(\Omega)$, fix $x_{0}\in \Omega$ such that
$B_{8r}(x_{0})\subset \Omega$ and let conditions {\rm (${\rm g}_{1}$)},
{\rm (${\rm g}_{2}$)} and \eqref{eq1.8} be fulfilled. Then $u$ is continuous at $x_{0}$.
\end{theorem}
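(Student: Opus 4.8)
The plan is to run the classical De\,Giorgi oscillation-decay machinery, but with the key twist that at the scale $r$ we only allow ourselves to ``see'' the function on the effective range $r\leqslant {\rm v}\lesssim K\lambda(r)$ where condition (${\rm g}_2$) is usable. Fix a ball $B_{8R}(x_0)\subset\Omega$ and for $0<r<R$ set $\omega(r):=\osc_{B_r(x_0)}u$ and $\mu_\pm(r):=\esssup_{B_r(x_0)}(\pm u)$. The first step is the standard alternative: either $(u-\mu_+(2r)+\tfrac12\omega(2r))_+$ or $(u-\mu_-(2r)-\tfrac12\omega(2r))_-$ is ``small in measure'' on $B_r(x_0)$, say the set where it is comparable to $\omega(2r)$ occupies at most half of $B_r(x_0)$. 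The energy inequalities \eqref{eq2.1pr1}, \eqref{eq2.2pr1} defining $\mathcal B_{1,g}$ are precisely the substitute for the Caccioppoli inequality that one needs to feed a De\,Giorgi iteration over shrinking balls between $B_r$ and $B_{r/2}$, together with the Sobolev/Poincar\'e embedding and the measure-to-pointwise lemma. The output of one De\,Giorgi step is the contraction
\begin{equation*}
\omega(r/2)\leqslant \Big(1-\tfrac{1}{\gamma}\,\eta(r)\Big)\,\omega(2r)+\gamma\,\frac{r}{\lambda(r)},
\end{equation*}
where the additive ``tail'' $\gamma r/\lambda(r)$ comes from the fact that below the level $r/\lambda(r)$ we cannot exploit (${\rm g}_2$), and $\eta(r)\in(0,1)$ is the contraction gain at scale $r$.

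The crucial point — and the main obstacle — is to show that the gain $\eta(r)$ is essentially $\lambda(r)$ up to a constant, rather than something exponentially small like $\exp(-\gamma\beta^{-\bar c})$ that would only give the weaker condition \eqref{eq1.13}. This is exactly where the structural hypothesis (${\rm g}_2$), with the quantitative dependence $g(x,{\rm v}/r)\leqslant c\,K^{c}\,g(y,{\rm v}/r)$ for $r\leqslant {\rm v}\leqslant K\lambda(r)$, does the work: choosing $K\sim \omega(2r)/(r)$ and keeping track of how the constant $K^{c}$ enters the energy estimates, one sees that the admissible step size in the De\,Giorgi level-truncation scheme is of order $\lambda(r)\,\omega(2r)$, so that after a controlled (bounded) number of steps one lowers the oscillation by a factor $1-\lambda(r)/\gamma$. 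I would be careful to track the interplay between the two exponents $p$ and $q$ in \eqref{eq2.1pr1}–\eqref{eq2.2pr1} (the $\varepsilon^{p-1}$ on the good term versus the $\varepsilon^{-1}$ on the bad one) and optimize $\varepsilon$; this is where (${\rm g}_1$) and the doubling property $\lambda(r)\leqslant (3/2)^{1-\delta_0}\lambda(r/2)$ get used to absorb constants uniformly across scales.

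Having the one-step contraction with gain $\lambda(r)/\gamma$ and tail $\gamma r/\lambda(r)$, the second step is a routine iteration lemma. Iterating from scale $\rho$ down to scale $2r$ along a geometric sequence of radii $\rho_j=\rho/2^j$, the multiplicative factors combine, via $\log(1-\lambda/\gamma)\asymp -\lambda/\gamma$, into $\exp\big(-C_1\sum_j\lambda(\rho_j)\big)\asymp \exp\big(-C_1\int_{2r}^{\rho}\lambda(t)\,dt/t\big)$ by comparing the sum with the integral (again using monotonicity and the doubling of $\lambda$), while the accumulated tails sum to a geometric series dominated by $C_1\,\rho/\lambda(\rho)$ (here one uses $r^{1-\delta_0}/\lambda(r)\to 0$, hence $r/\lambda(r)$ is ``almost increasing'', so the last term dominates). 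This yields precisely the quantitative bound \eqref{eq1.7} of Theorem \ref{th1.1}. Finally, under the Dini-type condition \eqref{eq1.8}, $\int_{2r}^{\rho}\lambda(t)\,dt/t\to\infty$ as $r\to0$, so the first term tends to $0$; since $\rho/\lambda(\rho)\to0$ as well (again by $r^{1-\delta_0}/\lambda(r)\to0$ and letting $\rho\to0$ after $r\to0$), we get $\omega(r)\to0$, i.e.\ continuity of $u$ at $x_0$. To be tidy I would state and prove the one-step lemma and the iteration lemma as separate auxiliary lemmas before assembling Theorem \ref{th2.1pr1}.
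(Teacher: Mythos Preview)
Your plan is correct and is essentially the same as the paper's: a De\,Giorgi-type lemma plus an expansion-of-positivity lemma at the $\lambda(r)$-scaled levels, producing the one-step contraction $\omega_{\rho/2}\leqslant(1-C_\ast^{-1}\lambda(\rho))\,\omega_\rho+C_\ast\rho/\lambda(\rho)$, which is then iterated exactly as you describe to get \eqref{eq1.7} and hence continuity under \eqref{eq1.8}.

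One clarification on the mechanism for the gain $\eta(r)\sim\lambda(r)$: your formula $K\sim\omega(2r)/r$ is not the right bookkeeping. In the paper the levels are taken as $k_s=\mu_r^+-2^{-s}\lambda(r)\,\omega_r$, so that $M_+(k_s,r)\leqslant\lambda(r)\,\omega_r\leqslant 2M\lambda(r)$ and condition (${\rm g}_2$) applies with $K=2M$ \emph{independent of $r$}; thus the constant $K^c$ never grows along the iteration, and the factor $\lambda(r)$ in the contraction comes purely from the choice of level spacing, not from tracking a $K^c$ dependence. With this choice the $\mathcal B_{1,g}$ inequality at the optimized $\varepsilon=(|A^+_{k_s,r}\setminus A^+_{k_{s+1},r}|/|B_r|)^{1/p}$ feeds directly into the De\,Giorgi--Poincar\'e step, and the rest of your outline goes through verbatim.
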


We note that the solutions of Eq. \eqref{eq1.1} belong to the corresponding
$\mathcal{B}_{1,g}(\Omega)$ classes. The proof of inequalities
\eqref{eq2.1pr1} and \eqref{eq2.2pr1} is completely similar to the proof
of \cite[Sec.\,2, inequalities (2.1), (2.2)]{SkrVoitNA20}.
Theorem \ref{th2.1pr1} is an immediate  consequence of the following two lemmas.

\begin{lemma}[De\,Giorgi Type Lemma]\label{lem2.1pr1}
Let $u\in \mathcal{B}_{1,g}(\Omega)$,
let $B_{8r}(x_{0})\subset B_{R}(x_{0})\subset \Omega$, and
$$
\mu^{+}_{r}\geqslant \esssup\limits_{B_{r}(x_{0})}u, \quad
\mu^{-}_{r}\leqslant \essinf\limits_{B_{r}(x_{0})}u, \quad
\omega_{r}:=\mu^{+}_{r}-\mu^{-}_{r}
$$
and set $v_{+}:=\mu^{+}_{r}-u$, $v_{-}:=u-\mu^{-}_{r}$.
Fix $\xi\in(0,1)$, then there exists $\nu_{1}\in(0,1)$ depending only on
$n$, $p$, $q$, $c_{2}$ and $M$ such that if
\begin{equation}\label{eq2.3pr1}
\left| \left\{ x\in B_{r}(x_{0}):v_{\pm}(x)\leqslant \xi\,\lambda(r)\,\omega_{r}\right\} \right|
\leqslant\nu_{1}|B_{r}(x_{0})|,
\end{equation}
then either
\begin{equation}\label{eq2.4pr1}
\xi\,\omega_{r}\leqslant \frac{4\,r}{\lambda(r)},
\end{equation}
or
\begin{equation}\label{eq2.5pr1}
v_{\pm}(x)\geqslant\frac{\xi}{4}\,\lambda(r)\,\omega_{r}
\quad \text{for a.a.} \ x\in B_{r/2}(x_{0}).
\end{equation}
\end{lemma}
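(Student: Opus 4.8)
The plan is to run a De\,Giorgi iteration for $v_{+}$; the case of $v_{-}$ is handled identically with \eqref{eq2.2pr1} in place of \eqref{eq2.1pr1}. First I would make two harmless reductions: if \eqref{eq2.4pr1} holds there is nothing to prove, so assume $\xi\lambda(r)\omega_{r}>4r$; and if $\esssup_{B_{r}(x_{0})}u\leqslant\mu^{+}_{r}-\tfrac{\xi}{4}\lambda(r)\omega_{r}$ then \eqref{eq2.5pr1} already holds on $B_{r}(x_{0})$, so assume $\esssup_{B_{r}(x_{0})}u>\mu^{+}_{r}-\tfrac{\xi}{4}\lambda(r)\omega_{r}$. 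Every level $k$ occurring below lies in the band $\big[\mu^{+}_{r}-\xi\lambda(r)\omega_{r},\,\mu^{+}_{r}-\tfrac{\xi}{4}\lambda(r)\omega_{r}\big]$, and for such $k$ and any ball $B_{\rho}(x_{0})\subset B_{r}(x_{0})$ with $\rho\geqslant r/2$ we have $M_{+}(k,\rho)\leqslant\mu^{+}_{r}-k\leqslant\xi\lambda(r)\omega_{r}\leqslant 2M\lambda(r)\leqslant\gamma M\lambda(\rho)$, using $\omega_{r}\leqslant 2M$ and the doubling $\lambda(r)\leqslant(3/2)^{1-\delta_{0}}\lambda(r/2)$. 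As long as, in addition, $M_{+}(k,\rho)\geqslant\rho$ — which, because \eqref{eq2.4pr1} fails, is the main case; the opposite one is disposed of at the end — condition {\rm (${\rm g}_{2}$)} applies with $K\asymp M$ and gives $g(x,M_{+}(k,\rho)/\rho)\asymp g(x_{0},M_{+}(k,\rho)/\rho)$ for $x\in B_{\rho}(x_{0})$, with constants depending on $M$. Dividing \eqref{eq2.1pr1} (written for this ball $B_{\rho}$) by $g(x_{0},M_{+}(k,\rho)/\rho)$ cancels that factor on both sides, and bounding $(u-k)_{+}\leqslant M_{+}(k,\rho)$ turns \eqref{eq2.1pr1} into the clean Caccioppoli estimate
$$
\int_{A^{+}_{k,\rho}\setminus A^{+}_{l,\rho}}|\nabla u|\,\zeta^{q}\,dx
\ \leqslant\ \gamma\Big(\tfrac{1}{\varepsilon}+\sigma^{-q}\varepsilon^{p-1}\Big)\,\frac{M_{+}(k,\rho)}{\rho}\,\big|A^{+}_{k,\rho}\big| .
$$

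With the coefficient frozen, the remainder is the classical iteration. Take radii $\rho_{j}:=\tfrac{r}{2}(1+2^{-j})\searrow r/2$, cut-offs $\zeta_{j}\in C_{0}^{\infty}(B_{\rho_{j}}(x_{0}))$ with $\zeta_{j}=1$ on $B_{\rho_{j+1}}(x_{0})$, $|\nabla\zeta_{j}|\leqslant\gamma 2^{j}/r$, and levels $k_{j}$ increasing from $k_{0}=\mu^{+}_{r}-\xi\lambda(r)\omega_{r}$ to $k_{\infty}=\mu^{+}_{r}-\tfrac{\xi}{4}\lambda(r)\omega_{r}-2r$ (note $k_{\infty}>k_{0}$, since $\xi\lambda(r)\omega_{r}>4r$), chosen so that $k_{j+1}-k_{j}\asymp 2^{-j}\xi\lambda(r)\omega_{r}$; the $-2r$ slack in $k_{\infty}$ is there to absorb the alternative case below. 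Put $A_{j}:=B_{\rho_{j}}(x_{0})\cap\{u>k_{j}\}$ and $y_{j}:=|A_{j}|/|B_{\rho_{j}}(x_{0})|$; by \eqref{eq2.3pr1}, $A_{0}\subset B_{r}(x_{0})\cap\{v_{+}\leqslant\xi\lambda(r)\omega_{r}\}$, so $y_{0}\leqslant\gamma\nu_{1}$. For each $j$ at which $M_{+}(k_{j},\rho_{j})\geqslant\rho_{j}$, apply the displayed Caccioppoli estimate with $\rho=\rho_{j}$, $k=k_{j}$, $l=k_{j+1}$, $\varepsilon=1$, $\zeta=\zeta_{j}$, $\sigma=\sigma_{j}\asymp 2^{-j}$, then apply the $L^{1}$--Sobolev inequality to $w_{j}:=\zeta_{j}^{q}\min\{(u-k_{j})_{+},k_{j+1}-k_{j}\}$, whose gradient is supported in $\{k_{j}<u<k_{j+1}\}\subset A^{+}_{k_{j},\rho_{j}}\setminus A^{+}_{k_{j+1},\rho_{j}}$ apart from the cut-off term, and which equals $k_{j+1}-k_{j}$ on $A_{j+1}$. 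Using $M_{+}(k_{j},\rho_{j})\leqslant\xi\lambda(r)\omega_{r}$ and, crucially, the identity $A^{+}_{k_{j},\rho_{j}}=A_{j}$, this yields
$$
(k_{j+1}-k_{j})\,|A_{j+1}|^{\frac{n-1}{n}}
\ \leqslant\ \gamma\,2^{qj}\,\frac{\xi\lambda(r)\omega_{r}}{r}\,|A_{j}| ,
$$
and after dividing by $k_{j+1}-k_{j}\asymp 2^{-j}\xi\lambda(r)\omega_{r}$ and normalising,
$$
y_{j+1}\ \leqslant\ \gamma\, b^{\,j}\,y_{j}^{\,1+\frac{1}{n-1}},\qquad b=b(n,q)>1 .
$$

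The fast geometric convergence lemma now produces $\nu_{1}=\nu_{1}(n,p,q,c_{1},M)\in(0,1)$ such that $y_{0}\leqslant\gamma\nu_{1}$ forces $y_{j}\to 0$. If $M_{+}(k_{j},\rho_{j})\geqslant\rho_{j}$ for all $j$, then since $B_{r/2}(x_{0})\cap\{u>\mu^{+}_{r}-\tfrac{\xi}{4}\lambda(r)\omega_{r}\}\subset A_{j}$ for every $j$ we obtain $v_{+}\geqslant\tfrac{\xi}{4}\lambda(r)\omega_{r}$ a.e.\ in $B_{r/2}(x_{0})$, i.e.\ \eqref{eq2.5pr1}. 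If instead $M_{+}(k_{j_{0}},\rho_{j_{0}})<\rho_{j_{0}}$ for some $j_{0}$, then either $|A_{j_{0}}|=0$ (and \eqref{eq2.5pr1} is immediate), or $\esssup_{B_{\rho_{j_{0}}}(x_{0})}u<k_{j_{0}}+\rho_{j_{0}}\leqslant k_{\infty}+r<\mu^{+}_{r}-\tfrac{\xi}{4}\lambda(r)\omega_{r}$, which again gives \eqref{eq2.5pr1} since $\rho_{j_{0}}\geqslant r/2$. The step I expect to be the real obstacle is exactly this freezing/window bookkeeping — keeping $M_{+}(k_{j},\rho_{j})/\rho_{j}$ inside the range $\big[1,K\lambda(\rho_{j})/\rho_{j}\big]$ on which {\rm (${\rm g}_{2}$)} provides $g(x,\cdot)\asymp g(x_{0},\cdot)$ — since this is precisely where the non-logarithmic character of the problem enters and where the dichotomy \eqref{eq2.4pr1} is indispensable; the Sobolev step, the (harmless) lower-order terms of \eqref{eq2.1pr1}, and the iteration lemma itself are all routine.
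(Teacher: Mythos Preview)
Your proposal is correct and follows essentially the same route as the paper's proof: freeze the coefficient $g(x,\cdot)$ at $x_{0}$ via {\rm (${\rm g}_{2}$)} using the window $\rho_{j}\leqslant M_{+}(k_{j},\rho_{j})\leqslant K\lambda(\rho_{j})$, cancel the frozen factor in \eqref{eq2.1pr1}, and then run the classical De\,Giorgi iteration with radii $\rho_{j}=\tfrac{r}{2}(1+2^{-j})$ and geometrically spaced levels. The only cosmetic difference is the bookkeeping for the threshold alternative: the paper takes $k_{\infty}=\mu_{r}^{+}-\tfrac{\xi}{2}\lambda(r)\omega_{r}$ and disposes of the case $M_{+}(k_{\infty},r/2)<\tfrac{\xi}{4}\lambda(r)\omega_{r}$ at the outset (so that, once \eqref{eq2.4pr1} fails, $M_{+}(k_{j},r_{j})\geqslant r_{j}$ for every $j$), whereas you build the $-2r$ slack into $k_{\infty}$ and handle a possible finite-step failure $M_{+}(k_{j_{0}},\rho_{j_{0}})<\rho_{j_{0}}$ separately; both devices serve the same purpose and the remaining Sobolev/iteration steps coincide.
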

\begin{proof}
We provide the proof of \eqref{eq2.5pr1} for $v_{+}$, while the proof for $v_{-}$
is completely similar.
For $j=0,1,2,\ldots$ we set $r_{j}:=\dfrac{r}{2}(1+2^{-j})$,
$k_{j}:=\mu^{+}_{r}-\dfrac{\xi\,\lambda(r)\,\omega_{r}}{2}-
\dfrac{\xi\,\lambda(r)\,\omega_{r}}{2^{j+1}}$. We assume that
$M_{+}(k_{\infty},r/2)\geqslant \dfrac{\xi}{4}\,\lambda(r)\,\omega_{r}$,
because in the opposite case, the required \eqref{eq2.5pr1} is evident.
If \eqref{eq2.4pr1} is violated then $M_{+}(k_{\infty},r/2)\geqslant r$.
In addition, since $M_{+}(k_{j},r)\leqslant \xi\,\lambda(r)\,\omega_{r}$
for $j=0,1,2,\ldots$, then condition {\rm (${\rm g}_{2}$)} is applicable and
we obtain that
$$
g\left(x, \frac{M_{+}(k_{j},r_{j})}{r_{j}}  \right)\asymp
2^{j\gamma} g\left(x_{0}, \frac{M_{+}(k_{j},r_{j})}{r}  \right),
\quad x\in B_{r_{j}}(x_{0}).
$$
Therefore inequality \eqref{eq2.1pr1} with $\varepsilon=1$ can be rewritten as
$$
\int\limits_{A^{+}_{k_{j},r_{j+1}}\setminus A^{+}_{k_{j+1},r_{j+1}}}
|\nabla u|\,dx\leqslant \gamma\,2^{j\gamma}
\xi\,\lambda(r)\,\omega_{r} |A^{+}_{k_{j},r_{j}}|.
$$
From this, using the Sobolev embedding theorem completely similar to that of
\cite[Chap.\,2, Lemma\,6.1]{LadUr}, we arrive at the required \eqref{eq2.5pr1},
which completes the proof of the lemma.
\end{proof}
\begin{lemma}[Expansion of Positivity]\label{lem2.2pr1}
Let $u\in \mathcal{B}_{1,g}(\Omega)$,
let $B_{8r}(x_{0})\subset B_{R}(x_{0})\subset \Omega$ and $\xi\in(0,1)$.
Assume that with some $\alpha\in(0,1)$ there holds
\begin{equation}\label{eq2.6pr1}
\left| \left\{ x\in B_{3r/4}(x_{0}):v_{\pm}(x)
\leqslant \xi\,\lambda(r)\,\omega_{r} \right\} \right|
\leqslant(1-\alpha)\, |B_{3r/4}(x_{0})|.
\end{equation}
Then there exists number $C_{\ast}$ depending only on
$n$, $p$, $q$, $c_{2}$, $M$, $\alpha$ and $\xi$ such either
\begin{equation}\label{eq2.7pr1}
\omega_{r}\leqslant  \frac{C_{\ast}r}{\lambda(r)},
\end{equation}
or
\begin{equation}\label{eq2.8pr1}
v_{\pm}(x)\geqslant C_{\ast}^{-1}\lambda(r)\,\omega_{r}
\quad \text{for a.a.} \ x\in B_{r/2}(x_{0}).
\end{equation}
\end{lemma}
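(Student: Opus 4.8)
The plan is to pass, in two steps, from the density estimate \eqref{eq2.6pr1} to the smallness hypothesis \eqref{eq2.3pr1} of Lemma~\ref{lem2.1pr1} at a comparable level, and then to quote Lemma~\ref{lem2.1pr1}. It suffices to argue for $v_{+}$, and we may assume that \eqref{eq2.7pr1} is violated, i.e.\ $\omega_{r}>C_{\ast}r/\lambda(r)$, where $C_{\ast}$ will be fixed, large, in terms of $n$, $p$, $q$, the constant $c_{1}$ from the definition of $\mathcal{B}_{1,g}(\Omega)$, $M$, $\alpha$ and $\xi$; otherwise there is nothing to prove. Fix $s\in\mathbb{N}$ (to be chosen) and set $\xi_{j}:=2^{-j}\xi$, $k_{j}:=\mu^{+}_{r}-\xi_{j}\lambda(r)\omega_{r}$ for $j=0,\dots,s$, so that $k_{0}<\dots<k_{s}$ and the sets $A^{+}_{k_{j},\cdot}$ decrease with $j$. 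The point that makes {\rm (${\rm g}_{2}$)} usable throughout is the following dichotomy, borrowed from the proof of Lemma~\ref{lem2.1pr1}: if $M_{+}(k_{s},r/2)<\tfrac14\xi_{s}\lambda(r)\omega_{r}$, then $u<\mu^{+}_{r}-\tfrac34\xi_{s}\lambda(r)\omega_{r}$ a.e.\ in $B_{r/2}(x_{0})$, which is \eqref{eq2.8pr1} with $C_{\ast}=4/(3\xi_{s})$; hence we may assume $M_{+}(k_{s},r/2)\ge\tfrac14\xi_{s}\lambda(r)\omega_{r}$. Combined with $\omega_{r}>C_{\ast}r/\lambda(r)$ and $C_{\ast}\ge 4/\xi_{s}$ this forces $M_{+}(k_{j},r)\ge M_{+}(k_{s},r/2)>r$ for every $j\le s$, while trivially $M_{+}(k_{j},r)\le\xi\lambda(r)\omega_{r}\le 2M\lambda(r)$. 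Thus every argument $M_{+}(k_{j},r)/r$ lies in $[1,2M\lambda(r)/r]$, and {\rm (${\rm g}_{2}$)} with $K=2M$ lets us replace $g(x,M_{+}(k_{j},r)/r)$ by $g(x_{0},M_{+}(k_{j},r)/r)$ up to a constant $\gamma$ independent of $j$.

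The measure–shrinking step is then the standard De\,Giorgi argument. I would apply \eqref{eq2.1pr1} with $k=k_{j}$, $l=k_{j+1}$, a small parameter $\varepsilon$ (to be chosen), and a cutoff $\zeta=1$ on $B_{r(1-\sigma)}(x_{0})$ with $\sigma:=\nu_{1}/(2n)$, where $\nu_{1}$ is the constant of Lemma~\ref{lem2.1pr1}; cancelling the common factor $g(x_{0},M_{+}(k_{j},r)/r)$ by means of the preceding observation, using $(u-k_{j})_{+}\le M_{+}(k_{j},r)$ on the right, and then invoking the De\,Giorgi isoperimetric inequality on $B_{r(1-\sigma)}(x_{0})$ (legitimate, since \eqref{eq2.6pr1} and the monotonicity of $A^{+}_{k_{j},\cdot}$ give $|B_{r(1-\sigma)}(x_{0})\setminus A^{+}_{k_{j},r(1-\sigma)}|\ge\gamma^{-1}\alpha\,|B_{r(1-\sigma)}(x_{0})|$ for all $j$), together with $k_{j+1}-k_{j}=\tfrac12\xi_{j}\lambda(r)\omega_{r}\ge\gamma^{-1}M_{+}(k_{j},r)$, and cancelling $\xi_{j}\lambda(r)\omega_{r}$, one is left with
\begin{equation*}
\bigl|A^{+}_{k_{j+1},r(1-\sigma)}\bigr|^{1-\frac1n}
\le\frac{\gamma}{\alpha\varepsilon r}\,\bigl|A^{+}_{k_{j},r}\setminus A^{+}_{k_{j+1},r}\bigr|
+\frac{\gamma\,\varepsilon^{\,p-1}}{\alpha r}\,\bigl|A^{+}_{k_{j},r}\bigr|,\qquad j=0,\dots,s-1.
\end{equation*}
Assume, for contradiction, that $|A^{+}_{k_{s},r(1-\sigma)}|\ge\tfrac12\nu_{1}|B_{r(1-\sigma)}(x_{0})|$; then the left–hand side is $\ge\gamma^{-1}\nu_{1}^{\,1-1/n}r^{n-1}$ for every $j\le s-1$, and choosing $\varepsilon=\varepsilon(n,p,q,c_{1},M,\alpha)$ so small that $\gamma\varepsilon^{p-1}\alpha^{-1}|A^{+}_{k_{j},r}|/r$ is at most half of it, one obtains $|A^{+}_{k_{j},r}\setminus A^{+}_{k_{j+1},r}|\ge c_{\ast}r^{n}$ for all $j\le s-1$, with $c_{\ast}=c_{\ast}(n,p,q,c_{1},M,\alpha)>0$. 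Since these sets are pairwise disjoint subsets of $B_{r}(x_{0})$, summation gives $s\,c_{\ast}r^{n}\le|B_{r}(x_{0})|$, i.e.\ $s\le s_{0}(n,p,q,c_{1},M,\alpha)$; picking any integer $s>s_{0}$ yields a contradiction. Hence $|A^{+}_{k_{s},r(1-\sigma)}|<\tfrac12\nu_{1}|B_{r(1-\sigma)}(x_{0})|$, and since $|B_{r}(x_{0})\setminus B_{r(1-\sigma)}(x_{0})|\le n\sigma|B_{r}(x_{0})|=\tfrac12\nu_{1}|B_{r}(x_{0})|$ and $\{u\ge k_{s+1}\}\subset\{u>k_{s}\}$, we get $\bigl|\{x\in B_{r}(x_{0}):v_{+}(x)\le\xi_{s+1}\lambda(r)\omega_{r}\}\bigr|\le|A^{+}_{k_{s},r}|\le\nu_{1}|B_{r}(x_{0})|$, which is exactly \eqref{eq2.3pr1} with $\xi$ replaced by $\xi_{s+1}$.

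Finally I would apply Lemma~\ref{lem2.1pr1} with $\xi$ replaced by $\xi_{s+1}=2^{-s-1}\xi$: it yields either $\xi_{s+1}\omega_{r}\le 4r/\lambda(r)$, which is \eqref{eq2.7pr1} with $C_{\ast}=2^{s+3}/\xi$, or $v_{+}\ge\tfrac14\xi_{s+1}\lambda(r)\omega_{r}$ a.e.\ in $B_{r/2}(x_{0})$, which is \eqref{eq2.8pr1} with the same $C_{\ast}$; as $s$ has already been fixed in terms of $n,p,q,c_{1},M$ and $\alpha$, putting $C_{\ast}:=2^{s+3}/\xi$ (which also respects the earlier requirement $C_{\ast}\ge 4/\xi_{s}$) completes the proof. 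The one genuinely delicate point, and the reason for the dichotomy in the first paragraph, is to keep every argument $M_{+}(k_{j},r)/r$ inside the window $[1,2M\lambda(r)/r]$ on which {\rm (${\rm g}_{2}$)} is available: if $\omega_{r}$ were small relative to $r/\lambda(r)$ the deeper levels $k_{j}$ would leave this window and the comparison of $g(x,\cdot)$ with $g(x_{0},\cdot)$ — hence the whole cancellation — would break down; but precisely in that case \eqref{eq2.7pr1} holds and we are done anyway. Everything else is the classical De\,Giorgi counting, plus the routine bookkeeping of passing from the ball $B_{3r/4}(x_{0})$ in \eqref{eq2.6pr1} to $B_{r(1-\sigma)}(x_{0})$ and back to $B_{r}(x_{0})$.
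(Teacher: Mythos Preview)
Your proof is correct and follows essentially the same route as the paper's: both use the $\mathcal{B}_{1,g}$ inequality \eqref{eq2.1pr1} on a dyadic sequence of levels, freeze $g(x,\cdot)$ at $x_{0}$ via {\rm (${\rm g}_{2}$)} (justified by the dichotomy you spell out), and feed the resulting gradient estimate into the De\,Giorgi--Poincar\'e inequality to shrink the measure of the superlevel sets. The only noteworthy technical difference is in the handling of $\varepsilon$: the paper chooses $\varepsilon=\bigl(|A^{+}_{k_{s},r}\setminus A^{+}_{k_{s+1},r}|/|B_{r}(x_{0})|\bigr)^{1/p}$ adaptively so that both terms on the right of \eqref{eq2.1pr1} acquire the same factor $|A^{+}_{k_{s},r}\setminus A^{+}_{k_{s+1},r}|^{(p-1)/p}$, and then sums; you fix $\varepsilon$ small to kill the full-measure term and run the textbook contradiction, then invoke Lemma~\ref{lem2.1pr1} to finish. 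Both variants are standard and lead to the same conclusion.
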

\begin{proof}
We provide the proof of \eqref{eq2.8pr1} for $v_{+}$,
while the proof for $v_{-}$ is completely similar.
We set $k_{s}:=\mu_{r}^{+}-\dfrac{\lambda(r)\,\omega_{r}}{2^{s}}$,
$s=1,2, \ldots,s_{\ast}$, where $s_{\ast}$ is large enough to be chosen later.
For $1\leqslant s\leqslant s_{\ast}$ we assume that
$M_{+}(k_{s},r/2)\geqslant \dfrac{\lambda(r)\,\omega_{r}}{2^{s+1}}$,
since otherwise inequality \eqref{eq2.8pr1} is evident.
If \eqref{eq2.7pr1} is violated, then
$M_{+}(k_{s},r/2)\geqslant C_{\ast}r/2^{s_{\ast}}\geqslant r$ if
$C_{\ast}\geqslant 2^{s_{\ast}}$. In addition, since
$M_{+}(k_{s},r)\leqslant 2^{-s}\lambda(r)\,\omega_{r}$, $s=\overline{1,s_{\ast}}$,
then by  {\rm (${\rm g}_{2}$)} we obtain that
$$
g\left( x,\frac{M_{+}(k_{s},r)}{r} \right)\asymp
g\left( x_{0},\frac{M_{+}(k_{s},r)}{r} \right),
\quad x\in B_{r}(x_{0}).
$$
Therefore inequality \eqref{eq2.1pr1} with
$
\varepsilon=
\left( \dfrac{|A^{+}_{k_{s},r}\setminus A^{+}_{k_{s+1},r}|}{|B_{r}(x_{0})|} \right)^{1/p}
$
can be rewritten as
$$
\int\limits_{A^{+}_{k_{s},r}\setminus A^{+}_{k_{s+1},r}}
|\nabla u|\,\zeta^{\,q}\,dx\leqslant \gamma\,\frac{\lambda(r)\,\omega_{r}}{2^{s}}
\left( \dfrac{|A^{+}_{k_{s},r}\setminus A^{+}_{k_{s+1},r}|}
{|B_{r}(x_{0})|} \right)^{\frac{p-1}{p}}|B_{r}(x_{0})|,
$$
where $\zeta\in C_{0}^{\infty}(B_{r}(x_{0}))$, $0\leqslant\zeta\leqslant1$,
$\zeta=1$ in $B_{3r/4}(x_{0})$, $|\nabla\zeta|\leqslant 4/r$.
From this, using \eqref{eq2.6pr1} and De\,Giorgi-Poincar\'{e} inequality
(see, e.g. \cite[Chap.\,2, Lemma\,3.9]{LadUr}), we arrive at the required \eqref{eq2.8pr1},
which completes the proof of the lemma.
\end{proof}

To complete the proof of Theorem \ref{th2.1pr1} we assume that the following two alternative
cases are possible:
$$
\left|\left\{x\in B_{3\rho/4}(x_{0}):
u(x)\geqslant \mu^{+}_{\rho}-\frac{\omega_{\rho}}{2}\right\} \right|
\leqslant \frac{1}{2}\,|B_{3\rho/4}(x_{0})|
$$
or
$$
\left|\left\{x\in B_{3\rho/4}(x_{0}):
u(x)\leqslant \mu^{-}_{\rho}+\frac{\omega_{\rho}}{2}\right\} \right|
\leqslant \frac{1}{2}\,|B_{3\rho/4}(x_{0})|
$$
for $0<\rho<R$. Assume, for example, the first one. Then by Lemma \ref{lem2.2pr1},
using the fact that
$$
\left\{ x\in B_{3\rho/4}(x_{0}):
u(x)\geqslant \mu_{\rho}^{+}-\frac{\lambda(\rho)\,\omega_{\rho}}{2}  \right\}
\subset
\left\{ x\in B_{3\rho/4}(x_{0}):
u(x)\geqslant \mu_{\rho}^{+}-\frac{\omega_{\rho}}{2}  \right\},
$$
we obtain
$$
\omega_{\rho/2}\leqslant \left(1-C_{\ast}^{-1}\lambda(\rho)\right)
\omega_{\rho}+\frac{C_{\ast}\rho}{\lambda(\rho)}.
$$
Iterating this inequality, we have for any $j\geqslant1$
\begin{equation*}
\omega_{\rho_{j}}\leqslant \omega_{\rho}\prod\limits_{i=0}^{j-1}
\left(1-C_{\ast}^{-1}\lambda(\rho_{i})\right)+ C_{\ast}
\sum\limits_{i=0}^{j-1} \frac{\rho_{i}}{\lambda(\rho_{i})}
\leqslant \omega_{\rho}
\exp\left(-\gamma\sum\limits_{i=0}^{j-1}\lambda(\rho_{i}) \right)
+\frac{\gamma\,\rho}{\lambda(\rho)}, \ \ \rho_{j}=2^{-j}\rho.
\\
\end{equation*}

This completes the proof of Theorem \ref{th2.1pr1}.


\section{Upper and lower estimates of auxiliary solutions.
Proof of Theorem \ref{th1.4}}\label{Sec3}

In this Section we prove upper and lower bounds for auxiliary solutions
$v:=v(x,m)$ to problem \eqref{eq1.14}, \eqref{eq1.15}. The existence of the solutions
$v$ follows from the general theory of monotone operators. We will assume that the following
integral identity holds:
\begin{equation}\label{eq3.1}
\int_{\mathcal{D}}\mathbf{A}(x, \nabla v)\,\nabla\varphi \,dx=0
\quad \text{for any } \ \varphi\in W_{0}(\mathcal{D}).
\end{equation}

Testing \eqref{eq3.1} by $\varphi=(v-m)_{+}$ and by $\varphi=v_{-}$ and using condition
\eqref{eq1.9}, we obtain that $0\leqslant v\leqslant m$.

Further we will assume that inequality \eqref{eq1.14pr1} is violated, i.e.
\begin{equation}\label{eq3.2pr1}
C(E, B_{8\rho}(x_{0});m)\geqslant \overline{c}\rho^{n-1},
\end{equation}
where $\overline{c}\geqslant1$ to be chosen later depending only on the data.


\subsection{Upper bound for the function $v$}
We note that in the standard case (i.e. if $p=q$) the upper bound for the function $v$
was proved in \cite{IVSkr1983}
(see also \cite[Chap.\,8, Sec.\,3]{IVSkrMetodsAn1994}, \cite{IVSkrSelWorks}).

For $i, j=0,1,2, \ldots$ set $k_{j}:= k(1-2^{-j})$, $k>0$ to be chosen later,
$\rho_{i,j}:=2^{-i-j-3}\rho$,
$$
M_{i}:=\esssup\limits_{F_{i}}v, \quad
F_{i}:=\left\{ x\in \mathcal{D}: \frac{\rho}{4}(1+2^{-i})\leqslant |x-x_{0}|
\leqslant \frac{\rho}{2}(3-2^{-i}) \right\}.
$$
Fix $\overline{x}\in F_{i}$ and suppose that $(v(\overline{x})-k)_{+}\geqslant\rho$,
then $M_{i,j}(k_{j}):=\esssup\limits_{B_{\rho_{i,j}}(\overline{x})}(v-k_{j})\geqslant\rho_{i,j}$.
And let
$$
\zeta_{i,j}\in C_{0}^{\infty}(B_{\rho_{i,j}}(\overline{x})), \ \
0\leqslant\zeta_{i,j}\leqslant 1, \ \
\zeta_{i,j}=1 \ \text{in} \ B_{\rho_{i,j+1}}(\overline{x}), \ \
|\nabla \zeta_{i,j}|\leqslant 2^{i+j+4}/\rho.
$$
Note that our choice of $\lambda(r)$ guarantees
the inequality
$$
\lambda(R_{1})\leqslant 2\left(\dfrac{R_{1}}{R_{2}}\right)^{1-\delta_{0}}\lambda(R_{2}),
\quad 0<R_{2}<R_{1}<R.
$$
Then
$v(x)\leqslant m\leqslant\lambda(8\rho)\leqslant\gamma\, 2^{(i+j)\gamma}\,\lambda(\rho_{i,j})$,
$x\in B_{\rho_{i,j}}(\overline{x})$.
Therefore condition (${\rm g}_{2}$) is applicable in $B_{\rho_{i,j}}(\overline{x})$ and we have
by (${\rm g}_{2}$) that
$$
g\left(x, \frac{M_{i,j}(k_{j+1})}{\rho_{i,j}}\right)
\asymp 2^{(i+j)\gamma}
g\left(\overline{x}, \frac{M_{i,j}(k_{j+1})}{\rho_{i,j}}\right),
\quad x\in B_{\rho_{i,j}}(\overline{x}).
$$

So, testing \eqref{eq3.1} by $\varphi=(v-k_{j+1})_{+}\,\zeta_{i,j}^{\,q}$ and using the
Young inequality \eqref{Youngineq}, we obtain
$$
\int\limits_{B_{\rho_{i,j}}(\overline{x})}
G(x, |\nabla(v-k_{j+1})_{+}|)\,\zeta_{i,j}^{\,q}\,dx
\leqslant \frac{\gamma\,2^{(i+j)\gamma}}{\rho}\,
g\left(\overline{x}, \frac{M_{i,j}(k_{j+1})}{\rho_{i,j}}\right)
\int\limits_{B_{\rho_{i,j}}(\overline{x})}(v-k_{j+1})_{+}\,dx.
$$
From this, using again inequality \eqref{Youngineq}, we get
$$
\begin{aligned}
&g\left(\overline{x}, \frac{M_{i,j}(k_{j+1})}{\rho_{i,j}}\right)
\int\limits_{B_{\rho_{i,j}}(\overline{x})}
|\nabla(v-k_{j+1})|\,\zeta_{i,j}^{\,q}\,dx
\\
&\leqslant
\gamma\,2^{(i+j)\gamma}\int\limits_{B_{\rho_{i,j}}(\overline{x})}
g\left(x, \frac{M_{i,j}(k_{j+1})}{\rho_{i,j}}\right)
|\nabla(v-k_{j+1})|\,\zeta_{i,j}^{\,q}\,dx
\\
&\leqslant
\frac{\gamma\,2^{(i+j)\gamma}}{\rho}\,M_{i,j}(k_{j+1})
\int\limits_{A^{+}_{\rho_{i,j}, k_{j+1}}}
g\left(x, \frac{M_{i,j}(k_{j+1})}{\rho_{i,j}}\right)dx
\\
&+\gamma\,2^{(i+j)\gamma}
\int\limits_{B_{\rho_{i,j}}(\overline{x})}
G(x, |\nabla(v-k_{j+1})_{+}|)\,\zeta_{i,j}^{\,q}\,dx
\\
&\leqslant
\frac{\gamma\,2^{(i+j)\gamma}}{\rho}\,
g\left(\overline{x}, \frac{M_{i,j}(k_{j+1})}{\rho_{i,j}}\right)
\left( \frac{M_{i,j}(k_{j+1})}{k}+1 \right)
\int\limits_{B_{\rho_{i,j}}(\overline{x})}
(v-k_{j+1})_{+}\,dx,
\end{aligned}
$$
here $A^{+}_{\rho_{i,j}, k_{j+1}}:=
B_{\rho_{i,j}}(\overline{x})\cap \{v>k_{j+1}\}$.
The last inequality can be rewritten as
$$
\int\limits_{B_{\rho_{i,j}}(\overline{x})}
|\nabla(v-k_{j+1})_{+}|\,\zeta_{i,j}^{\,q}\,dx
\leqslant
\frac{\gamma\, 2^{\gamma(i+j)}}{\rho}
\left( \frac{M_{i+1}}{k}+1 \right)
\int\limits_{B_{\rho_{i,j}}(\overline{x})} (v-k_{j+1})_{+}\,dx.
$$
From this, by standard arguments (see, for example, \cite[Chap.~2]{LadUr}),
choosing $k$ from the condition
$$
k=\gamma\,2^{i\gamma}\rho^{-n}\int\limits_{B_{\rho/2^{i+3}}(\overline{x})} v\,dx
+\gamma\,2^{i\gamma}M_{i+1}^{\frac{n}{n+1}}
\Bigg( \rho^{-n}\int\limits_{B_{\rho/2^{i+3}}(\overline{x})} v\,dx \Bigg)^{\frac{1}{n+1}},
$$
and keeping in mind our assumption that $v(\overline{x})\geqslant k+\rho$,
we arrive at
$$
v(\overline{x})\leqslant
\gamma\,2^{i\gamma}\rho^{-n}\int\limits_{B_{\rho/2^{i+3}}(\overline{x})} v\,dx
+\gamma\,2^{i\gamma}M_{i+1}^{\frac{n}{n+1}}
\Bigg( \rho^{-n}\int\limits_{B_{\rho/2^{i+3}}(\overline{x})} v\,dx\Bigg)^{\frac{1}{n+1}}
+\gamma\rho.
$$
Since $\overline{x}\in F_{i}$ is an arbitrary point, using the Young inequality,
from the previous we obtain for any $\varepsilon\in(0,1)$
\begin{equation}\label{eq3.3}
M_{i}\leqslant \varepsilon M_{i+1}+ \frac{\gamma\,2^{i\gamma}}{\varepsilon^{\gamma}\rho^{\,n}}
\int\limits_{F_{i+1}} v\,dx+\gamma\rho,
\quad i=0,1,2, \ldots.
\end{equation}

Let us estimate the second term on the right-hand side of \eqref{eq3.3}.
For this we set $v_{M_{i+1}}:=\min\{v, M_{i+1}\}$,
by {\rm (${\rm g}_{2}$)} and \eqref{Youngineq} we have
\begin{multline*}
\int\limits_{F_{i+1}} v\,dx=\int\limits_{F_{i+1}} v_{M_{i+1}}\,dx
\leqslant \gamma\rho\int\limits_{\mathcal{D}} |\nabla v_{M_{i+1}}|\,dx
\\
= \gamma\rho\int\limits_{\mathcal{D}}
|\nabla v_{M_{i+1}}|\,\frac{g(x,M_{i}/\rho)}{g(x,M_{i}/\rho)}\,dx
\leqslant
\varepsilon_{1} M_{i}\,\rho^{n}+
 \frac{\gamma\varepsilon_{1}^{-\gamma}\rho}{g(x_{0},M_{i}/\rho)}
\int\limits_{\mathcal{D}} G(x,|\nabla v_{M_{i+1}}|)\,dx
\end{multline*}
with arbitrary $\varepsilon_{1}\in(0,1)$.
Collecting the last two inequalities and choosing $\varepsilon_{1}$ from the condition
$\gamma\,2^{i\gamma}\varepsilon^{-\gamma}\varepsilon_{1}=\varepsilon$, we obtain
$$
M_{i}\leqslant 2\varepsilon M_{i+1}+
\frac{\gamma\,2^{i\gamma}\varepsilon^{-\gamma}\rho^{1-n}}{g(x_{0},M_{i}/\rho)}
\int\limits_{\mathcal{D}} G(x,|\nabla v_{M_{i+1}}|)\,dx+\gamma\rho,
$$
which by \eqref{Youngineq} implies
\begin{equation}\label{eq3.4}
\begin{aligned}
M_{i}\,g(x_{0},M_{i}/\rho)&\leqslant
3\varepsilon M_{i+1}\,g(x_{0},M_{i+1}/\rho)
\\
&+
\gamma\,2^{i\gamma}\varepsilon^{-\gamma}\rho^{1-n}\int
\limits_{\mathcal{D}} G(x,|\nabla v_{M_{i+1}}|)\,dx
+\gamma\rho, \quad i=0,1,2,\ldots .
\end{aligned}
\end{equation}

Let $\psi\in \mathfrak{M}(E)$ be such that
$$
\int\limits_{B_{8\rho}(x_{0})}
g(x,m|\nabla \psi|)\,|\nabla \psi|\,dx
\leqslant
C(E,B_{8\rho}(x_{0});m)+\rho^{n}.
$$
Testing identity \eqref{eq3.1} by $\varphi=v-m\psi$,
by the Young inequality \eqref{Youngineq} we obtain
\begin{equation}\label{eq3.5}
\int\limits_{\mathcal{D}} G(x,|\nabla v|)\,dx
\leqslant \gamma m \int\limits_{B_{8\rho}(x_{0})}
g(x,m|\nabla \psi|)\,|\nabla \psi|\,dx\leqslant
\gamma m\left( C(E, B_{8\rho}(x_{0});m)+\rho^{n} \right).
\end{equation}
Testing \eqref{eq3.1} by $\varphi=v_{M_{i+1}}-M_{i+1}v/m$
and using the Young inequality \eqref{Youngineq} and \eqref{eq3.5}, we have
$$
\int\limits_{\mathcal{D}} G(x,|\nabla v_{M_{i+1}}|)\,dx
\leqslant \frac{\gamma M_{i+1}}{m}
\int\limits_{\mathcal{D}} G(x,|\nabla v|)\,dx
\leqslant
\gamma M_{i+1}\big( C(E, B_{8\rho}(x_{0});m)+\rho^{n} \big).
$$
This inequality and \eqref{eq3.4} imply that
$$
\begin{aligned}
M_{i}\,g(x_{0},M_{i}/\rho)&\leqslant
3\varepsilon M_{i+1}\,g(x_{0},M_{i+1}/\rho)
\\
&+
\gamma\,2^{i\gamma}\varepsilon^{-\gamma} M_{i+1}\,\rho
+\gamma\,2^{i\gamma}\varepsilon^{-\gamma} M_{i+1}\,
\frac{C(E, B_{8\rho}(x_{0});m)}{\rho^{n-1}},
\quad i=0,1,2,\ldots ,
\end{aligned}
$$
which yields for any $\varepsilon_{2}\in (0,1)$
$$
\begin{aligned}
g(x_{0},M_{i}/\rho)&\leqslant
\frac{1}{\varepsilon_{2}}\, \frac{M_{i}}{M_{i+1}}\, g(x_{0},M_{i}/\rho)
+\varepsilon_{2}^{p-1} g(x_{0},M_{i+1}/\rho)
\\
&\leqslant
\left(\frac{3\varepsilon}{\varepsilon_{2}}+\varepsilon_{2}^{\,p-1}\right)
g(x_{0},M_{i+1}/\rho)+
\frac{\gamma\,2^{i\gamma}}{(\varepsilon\varepsilon_{2})^{\gamma}}
\left( \frac{C(E, B_{8\rho}(x_{0});m)}{\rho^{n-1}}+\rho \right)
\\
&\leqslant
\left(\frac{3\varepsilon}{\varepsilon_{2}}+\varepsilon_{2}^{\,p-1}\right)
g(x_{0},M_{i+1}/\rho)+
\frac{\gamma\,2^{i\gamma}}{(\varepsilon\varepsilon_{2})^{\gamma}}\,
\frac{C(E, B_{8\rho}(x_{0});m)}{\rho^{n-1}},
\quad i=0,1,2,\ldots,
\end{aligned}
$$
here we also used inequality \eqref{eq3.2pr1}.
Iterating the last inequality and choosing $\varepsilon_{2}$ and $\varepsilon$ small enough,
we arrive at
$$
g(x_{0},M_{0}/\rho)\leqslant
\gamma \, \frac{C(E, B_{8\rho}(x_{0});m)}{\rho^{n-1}},
$$
which proves the upper bound of the function $v$.


\subsection{Lower bound for the function $v$}

The main step in the proof of the lower bound is the following lemma.
\begin{lemma}
There exist positive numbers $\varepsilon$, $\vartheta\in(0,1)$ depending only on the data
such that
\begin{equation}\label{eq3.7}
\left| \left\{x\in K_{\rho/4,\, 2\rho}:
v(x)\leqslant \varepsilon\rho\,
g_{x_{0}}^{-1}\left( \frac{C(E, B_{8\rho}(x_{0});m)}{\rho^{n-1}} \right) \right\} \right|
\leqslant (1-\vartheta)\, |K_{\rho/4,\, 2\rho}|,
\end{equation}
where $K_{\rho_{1},\rho_{2}}:=B_{\rho_{2}}(x_{0})\setminus B_{\rho_{1}}(x_{0})$.
\end{lemma}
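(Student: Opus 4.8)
The plan is to combine the upper bound for $v$ (already established in the previous subsection, namely $g(x_0,M_0/\rho)\leqslant\gamma\,C(E,B_{8\rho}(x_0);m)/\rho^{n-1}$) with a global energy estimate and a capacitary lower bound for the energy of $v$. First I would introduce the shorthand
$$
N:=\rho\,g_{x_{0}}^{-1}\!\left(\frac{C(E,B_{8\rho}(x_{0});m)}{\rho^{n-1}}\right),
$$
so that the upper bound reads $v\leqslant\gamma N$ a.e. in $K_{\rho/4,2\rho}$ (up to the additive term $\gamma\rho$, which is harmless since \eqref{eq3.2pr1} forces $N\geqslant\gamma^{-1}\rho$). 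The goal is then to show that $v$ cannot be smaller than $\varepsilon N$ on all but a $\vartheta$-fraction of $K_{\rho/4,2\rho}$. I would argue by contradiction: suppose the set where $v\leqslant\varepsilon N$ occupies more than $(1-\vartheta)|K_{\rho/4,2\rho}|$ of the shell. Then, truncating $v$ from above at level $\varepsilon N$ on an even larger annulus and using a cutoff function that equals $1$ on $K_{\rho/4,2\rho}$ and vanishes outside $B_{8\rho}(x_0)$, together with the fact that $v=m$ (or $v\geqslant$ a definite level comparable to $m$) on $E$ by \eqref{eq1.15}, I would build an admissible test function $\varphi$ for the capacity $C(E,B_{8\rho}(x_0);m)$ out of $v$ itself, suitably rescaled.

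The key estimates are two. On the one hand, testing \eqref{eq3.1} by $\varphi=v-m\psi$ gave the global bound \eqref{eq3.5}:
$$
\int_{\mathcal D}G(x,|\nabla v|)\,dx\leqslant\gamma\,m\big(C(E,B_{8\rho}(x_0);m)+\rho^{n}\big).
$$
On the other hand, the capacity admits the lower bound: any competitor $\varphi$ built from $v/m$ (which is $\geqslant1$ on $E$) must have
$$
\int_{B_{8\rho}(x_0)}g(x,m|\nabla\varphi|)\,|\nabla\varphi|\,dx\geqslant C(E,B_{8\rho}(x_0);m).
$$
The heart of the matter is to choose the competitor so that its energy is, up to structural constants, controlled by $\int_{\{v\leqslant\varepsilon N\}\cap B_{8\rho}}G(x,|\nabla v|)\,dx/m$ plus a term coming from the gradient of the cutoff, the latter being of size $\rho^{-1}\cdot\varepsilon N\cdot(\text{volume})$, hence $\leqslant\gamma\,\varepsilon\,g(x_0,N/\rho)\rho^{n-1}=\gamma\,\varepsilon\,C(E,B_{8\rho}(x_0);m)$ by the choice of $N$ and the definition of $g_{x_0}^{-1}$, together with conditions (${\rm g}_1$), (${\rm g}_2$) to pass between $g(x,\cdot)$ and $g(x_0,\cdot)$ on balls of radius $\leqslant 8\rho$ (here one uses $m\leqslant\lambda(8\rho)$ so that (${\rm g}_2$) applies at scale $\rho$). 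Choosing $\varepsilon$ small makes this boundary term at most $\tfrac12 C(E,B_{8\rho}(x_0);m)$, so it can be absorbed. What remains is the interior term; using a De Giorgi–Poincaré (Sobolev) inequality on the shell to convert the measure hypothesis into an energy lower bound — precisely, if $v\leqslant\varepsilon N$ on a large portion of $K_{\rho/4,2\rho}$ while $v\leqslant\gamma N$ throughout, then $\int G(x,|\nabla v|)$ over the relevant region is forced to be small compared with $m\,C(E,B_{8\rho}(x_0);m)$ — contradicts the capacitary lower bound once $\vartheta$ is taken small enough in terms of the data.

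I expect the main obstacle to be the bookkeeping in the construction of the capacity competitor: one must simultaneously (i) ensure $\varphi\geqslant1$ on $E$, which is where the boundary datum $v\sim m$ on $E$ and the normalization by $1/m$ (or by the level $\varepsilon N$, whichever is appropriate for the piecewise definition) enter, (ii) keep $\varphi\in W_0(B_{8\rho}(x_0))$, forcing a cutoff near $\partial B_{8\rho}(x_0)$ whose gradient contributes the term estimated above, and (iii) match the inhomogeneous growth $g(x,m|\nabla\varphi|)|\nabla\varphi|$ against $G(x,|\nabla v|)=g(x,|\nabla v|)|\nabla v|$ — this is exactly where the Young-type inequality \eqref{Youngineq} and the doubling/comparison furnished by (${\rm g}_1$)–(${\rm g}_2$) are indispensable, since $m|\nabla\varphi|$ and $|\nabla v|$ live at comparable scales only after the truncation at level $\varepsilon N$. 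Once these three points are organized, the conclusion \eqref{eq3.7} follows by fixing $\varepsilon$ and then $\vartheta$ small, both depending only on $n,p,q,K_1,K_2,c_0,c,M$.
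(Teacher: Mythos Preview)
Your contradiction scheme has a genuine gap at the step you call ``the heart of the matter''. The assertion that if $v\leqslant\varepsilon N$ on a large portion of $K_{\rho/4,2\rho}$ (while $v\leqslant\gamma N$ throughout) then $\int G(x,|\nabla v|)$ over the relevant region is forced to be small is not a consequence of any De\,Giorgi--Poincar\'e or Sobolev inequality: those control function values by the gradient, not the reverse, and a function that is uniformly small on a set can carry arbitrarily large energy there. So the chain ``measure hypothesis $\Rightarrow$ competitor energy small $\Rightarrow$ contradicts capacitary lower bound'' breaks at the first arrow. There is also a scaling mismatch in the competitor you sketch: truncating at level $\varepsilon N$ and normalizing so that $\varphi=1$ on $E$ puts $(m/\varepsilon N)|\nabla v|$, not $|\nabla v|$, inside $g$, and the ratio $m/(\varepsilon N)$ is in general unbounded.

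The paper's argument is direct rather than by contradiction, and---this is the missing idea---it never tries to bound $\int G(x,|\nabla v|)$ from above via the level-set structure of $v$. Instead it tests \eqref{eq3.1} first with $\varphi=v-m\zeta_{1}^{\,q}$ (a cutoff $\zeta_{1}=1$ on $B_{\rho/2}(x_{0})$) to localize the total energy to $K_{\rho/2,\rho}$, and then with $\varphi=v\,\zeta_{2}^{\,q}$ (a cutoff supported in $K_{\rho/4,2\rho}$) to obtain a Caccioppoli estimate that replaces $\int_{K_{\rho/2,\rho}}G(x,|\nabla v|)\,dx$ by $\gamma\int_{K_{\rho/4,2\rho}}G(x_{0},v/\rho)\,dx$. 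Combined with the capacitary lower bound $C(E,B_{8\rho}(x_{0});m)\leqslant m^{-1}\int_{\mathcal{D}}G(x,|\nabla v|)\,dx$ this gives, after absorbing the error term by choosing $\varepsilon_{1}\asymp N$,
$$
C(E,B_{8\rho}(x_{0});m)\leqslant\frac{\gamma}{\varepsilon_{1}}\int_{K_{\rho/4,2\rho}}G(x_{0},v/\rho)\,dx.
$$
Now the integrand depends on $v$ itself, not on $|\nabla v|$, so splitting $K_{\rho/4,2\rho}$ into $\{v\leqslant\varepsilon N\}$ and its complement, applying $(\mathrm{g}_{1})$ on the first piece and the upper bound $v\leqslant\gamma N$ on the second, and choosing $\varepsilon$ small yields $|\{v>\varepsilon N\}|\geqslant\gamma^{-1}|K_{\rho/4,2\rho}|$ directly. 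The Caccioppoli passage from $|\nabla v|$ to $v/\rho$ is the step your plan lacks.
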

\begin{proof}
Let $\zeta_{1}\in C_{0}^{\infty}(B_{\rho}(x_{0}))$, $0\leqslant \zeta_{1}\leqslant1$,
$\zeta_{1}=1$ in $B_{\rho/2}(x_{0})$, $|\nabla \zeta_{1}|\leqslant 2/\rho$. Testing
\eqref{eq3.1} by $\varphi=v-m\,\zeta_{1}^{\,q}$ and using condition {\rm (${\rm g}_{2}$)}
and the Young inequality \eqref{Youngineq},
we obtain for any $0<\varepsilon_{1}\leqslant \lambda(\rho)$
$$
\begin{aligned}
\int\limits_{\mathcal{D}} G(x, |\nabla v|)\,dx
&\leqslant
\frac{\gamma m}{\rho} \int\limits_{K_{\rho/2,\rho}}
g(x, |\nabla v|)\,\zeta_{1}^{\,q-1} \,dx
\\
&\leqslant\frac{\gamma m}{\varepsilon_{1}}\int\limits_{K_{\rho/2,\rho}}
G(x, |\nabla v|)\,dx+\frac{\gamma m}{\rho}\int\limits_{K_{\rho/2,\rho}}
g(x,\varepsilon_{1}/\rho)\,dx
\\
&\leqslant \frac{\gamma m}{\varepsilon_{1}}\int\limits_{K_{\rho/2,\rho}}
G(x, |\nabla v|)\,dx+\gamma m \,g(x_{0}, \varepsilon_{1}/\rho)\,\rho^{n-1}.
\end{aligned}
$$
Let $\zeta_{2}\in C_{0}^{\infty}(K_{\rho/4,\, 2\rho})$, $0\leqslant \zeta_{2}\leqslant1$,
$\zeta_{2}=1$ in $K_{\rho/2,\, \rho}$, $|\nabla \zeta_{2}|\leqslant 4/\rho$.
Testing \eqref{eq3.1} by $\varphi=v\,\zeta_{2}^{\,q}$ and using the Young inequality
\eqref{Youngineq},
we estimate the first term on the right-hand side of the previous inequality as follows:
$$
\int\limits_{K_{\rho/2,\rho}}G(x, |\nabla v|)\,dx\leqslant
\int\limits_{K_{\rho/4,2\rho}}G(x, |\nabla v|)\,\zeta_{2}^{\,q}\,dx\leqslant
\gamma \int\limits_{K_{\rho/4,2\rho}}G(x, v/\rho)\,dx
\leqslant
\gamma \int\limits_{K_{\rho/4,2\rho}}G(x_{0}, v/\rho)\,dx,
$$
here we also used condition {\rm (${\rm g}_{2}$)} and the estimate
$v\leqslant m\leqslant \lambda(8\rho)$.
Combining the last two inequalities and using the definition of capacity,
we obtain
$$
C(E, B_{8\rho}(x_{0});m)\leqslant
\frac{1}{m}\int\limits_{\mathcal{D}}G(x, |\nabla v|)\,dx
\leqslant
\frac{\gamma}{\varepsilon_{1}}\int\limits_{K_{\rho/4,2\rho}}
G(x, v/\rho)\,dx+\gamma\, g(x_{0},\varepsilon_{1}/\rho)\rho^{n-1}.
$$
Choose $\varepsilon_{1}$ from the condition
$$
g(x_{0},\varepsilon_{1}/\rho)=\overline{\varepsilon}_{1}\,
\frac{C(E, B_{8\rho}(x_{0});m)}{\rho^{\,n-1}},
\quad \overline{\varepsilon}_{1}\in (0,1).
$$
By \eqref{eq3.2pr1} $\varepsilon_{1}\geqslant 8\rho$ if
$\overline{c}=\overline{c}(\overline{\varepsilon}_{1})$
is large enough. To apply condition  {\rm (${\rm g}_{2}$)}, we still need to check
the inequality  $\varepsilon_{1}\leqslant \lambda(8\rho)$.
Indeed, let $\psi\in C_{0}^{\infty}(B_{2\rho}(x_{0}))$, $0\leqslant \psi\leqslant1$,
$\psi=1$ in $B_{\rho}(x_{0})$ and $|\nabla \psi|\leqslant 2/\rho$, then by {\rm (${\rm g}_{2}$)}
we have
\begin{multline*}
C(E, B_{8\rho}(x_{0});m)\leqslant C(B_{\rho}(x_{0}), B_{8\rho}(x_{0});m)
\\
\leqslant
\int\limits_{B_{8\rho}(x_{0})}
g(x, m|\nabla \psi|)\,|\nabla \psi|\,dx
\leqslant
\frac{2}{\rho} \int\limits_{B_{2\rho}(x_{0})}g(x, 2m/\rho)\,dx
\leqslant \gamma\,g(x_{0},m/\rho)\,\rho^{n-1},
\end{multline*}
and therefore, if $\overline{\varepsilon}_{1}=1/2\gamma$, then
$\varepsilon_{1}\leqslant \rho\, g^{-1}_{x_{0}}
\big( \overline{\varepsilon}_{1} \gamma\, g(x_{0}, m/\rho) \big)
\leqslant m \leqslant \lambda(8\rho)$.
So, by our choices, from the previous, we obtain
\begin{equation}\label{eq3.8}
C(E, B_{8\rho}(x_{0});m)
\leqslant \frac{\gamma}{\varepsilon_{1}}
\int\limits_{K_{\rho/4, 2\rho}} G(x_{0},v/\rho)\,dx.
\end{equation}
Let us estimate the term on the right-hand side of \eqref{eq3.8}.
For this we decompose $K_{\rho/4, 2\rho}$ as
$K_{\rho/4, 2\rho}=K'_{\rho/4, 2\rho}\cup K''_{\rho/4, 2\rho}$, where
$$
K'_{\rho/4, 2\rho}:=K_{\rho/4, 2\rho}\cap
\left\{ v\leqslant \varepsilon \rho\, g^{-1}_{x_{0}}
\left( \frac{C(E, B_{8\rho}(x_{0});m)}{\rho^{\,n-1}} \right)
 \right\}, \quad
 K''_{\rho/4, 2\rho}:=K_{\rho/4, 2\rho}\setminus K'_{\rho/4, 2\rho},
$$
and $\varepsilon\in (0,1)$ is small enough to be determined  later.
Recall that
$$
\varepsilon_{1}=\rho\,g^{-1}_{x_{0}}
\left( \overline{\varepsilon}_{1} \frac{C(E, B_{8\rho}(x_{0});m)}{\rho^{\,n-1}} \right)
\geqslant \overline{\varepsilon}^{\,1/(p-1)}_{1}\rho\,
g^{-1}_{x_{0}}
\left( \frac{C(E, B_{8\rho}(x_{0});m)}{\rho^{\,n-1}} \right),
$$
so we have
\begin{equation}\label{eq3.9}
\frac{\gamma}{\varepsilon_{1}}\int\limits_{K'_{\rho/4, 2\rho}}
G(x_{0},v/\rho)\,dx \leqslant \varepsilon \gamma\,
C(E, B_{8\rho}(x_{0});m).
\end{equation}
Moreover, by the upper bound for the function $v$, \eqref{eq3.2pr1}
and our choice of $\varepsilon_{1}$, we obtain
\begin{equation}\label{eq3.10}
\frac{\gamma}{\varepsilon_{1}}\int\limits_{K''_{\rho/4, 2\rho}}
G(x_{0},v/\rho)\,dx
\leqslant
\gamma\, \frac{C(E, B_{8\rho}(x_{0});m)}{\rho^{\,n}}\,
|K''_{\rho/4, 2\rho}|.
\end{equation}
Collecting estimates \eqref{eq3.8}--\eqref{eq3.10} and choosing $\varepsilon$ from the condition
$\varepsilon\gamma=1/2$,
 we arrive at
$|K''_{\rho/4,\, 2\rho}|\geqslant \gamma^{-1} |K_{\rho/4,\, 2\rho}|$,
which completes the proof of the lemma.
\end{proof}

Repeating the similar arguments as in Section \ref{Sec2}, by Lemma \ref{lem2.2pr1}
we arrive at \eqref{eq1.16}, which completes the proof of Theorem \ref{th1.4}.


\section{Boundary continuity and Harnack's inequality,
proof of Theorems \ref{th1.2} and \ref{th1.3}}\label{Sec4}

In this Section we will prove Theorems \ref{th1.2} and \ref{th1.3}.

\subsection{Boundary continuity, proof of Theorem \ref{th1.2}}

\subsubsection{Auxiliary propositions}

Further we will need the following two simple lemmas.
\begin{lemma}[Weak Comparison Principle]\label{lem4.1pr1}
Suppose that $u$ is a bounded weak super-solution and $v$
is a bounded weak sub-solution to Eq. \eqref{eq1.1} in $\Omega$.
Assume also that \eqref{eq1.9} holds and $v\leqslant u$ on $\partial\Omega$,
then $v\leqslant u$ a.e. in $\Omega$.
\end{lemma}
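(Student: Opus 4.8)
The plan is to use the standard monotonicity test with the truncation $\varphi:=(v-u)_{+}$ and to exploit the strict monotonicity \eqref{eq1.9}. First I would verify that $\varphi$ is an admissible test function. Since $v\leqslant u$ on $\partial\Omega$ one has $(v-u)_{+}\in W_{0}^{1,1}(\Omega)$, and because $G(x,\cdot)$ is doubling by {\rm (${\rm g}_{1}$)} (the inequality $g(x,2{\rm v})\leqslant 2^{\,q-1}g(x,{\rm v})$ gives $G(x,a+b)\leqslant\gamma\,(G(x,a)+G(x,b))$), the pointwise bound $|\nabla\varphi|\leqslant(|\nabla u|+|\nabla v|)\,\chi_{\{v>u\}}$ together with $u,v\in W(\Omega)$ yields $\int_{\Omega}G(x,|\nabla\varphi|)\,dx<+\infty$. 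Hence $\varphi\in W_{0}(\Omega)$ and $\varphi\geqslant0$, so it may be used in \eqref{eq1.5}.

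Next I would insert $\varphi$ into the two weak formulations. Since $u$ is a super-solution, $\int_{\Omega}\mathbf{A}(x,\nabla u)\,\nabla\varphi\,dx\geqslant0$; since $v$ is a sub-solution, $\int_{\Omega}\mathbf{A}(x,\nabla v)\,\nabla\varphi\,dx\leqslant0$. Subtracting the first from the second and using $\nabla\varphi=(\nabla v-\nabla u)\,\chi_{\{v>u\}}$, we obtain
\[
\int_{\{v>u\}}\big(\mathbf{A}(x,\nabla v)-\mathbf{A}(x,\nabla u)\big)(\nabla v-\nabla u)\,dx\leqslant0.
\]

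By \eqref{eq1.9} the integrand is nonnegative everywhere on $\{v>u\}$ and vanishes precisely where $\nabla v=\nabla u$; therefore the displayed integral is zero and $\nabla v=\nabla u$ a.e.\ on $\{v>u\}$, that is $\nabla\varphi=0$ a.e.\ in $\Omega$. Since $\varphi\in W_{0}^{1,1}(\Omega)$, this forces $\varphi\equiv0$, i.e.\ $v\leqslant u$ a.e.\ in $\Omega$.

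The argument is essentially routine; the only point requiring a little care is the admissibility of $\varphi$ as an element of $W_{0}(\Omega)$, namely the finiteness of its generalized Orlicz energy, which is exactly where the growth condition {\rm (${\rm g}_{1}$)} enters. Beyond this I expect no genuine obstacle.
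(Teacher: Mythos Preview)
Your argument is correct and coincides with the paper's own proof, which simply says to test the weak formulations for $u$ and $v$ by $(v-u)_{+}$, subtract, and invoke \eqref{eq1.9} to reach a contradiction. You have in fact supplied more detail than the paper does, in particular the verification via {\rm (${\rm g}_{1}$)} that $(v-u)_{+}\in W_{0}(\Omega)$.
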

\begin{proof}
If not, test the correspondent identities for $v$ and $u$ by $(v-u)_{+}$,
subtracting the resulting inequalities and using condition \eqref{eq1.9},
we arrive to a contradiction. This proves the lemma.
\end{proof}
\begin{lemma}\label{lem4.2pr1}
Let $u$ be a bounded weak solution to Eq. \eqref{eq1.1} in $\Omega$ and
$u-f\in W_{0}(\Omega)$ and assume that \eqref{eq1.9} holds. Fix $x_{0}\in \partial\Omega$
and take any number $k\geqslant \sup\limits_{B_{\rho}(x_{0})\cap\partial\Omega}f$ and
define
$$
u_{k}^{+}:=\begin{cases}
 (u-k)_{+}
& \text{if } \ x\in B_{\rho}(x_{0})\cap\Omega, \\[0pt]
 0
& \text{if } \ x\in B_{\rho}(x_{0})\setminus\Omega.
\end{cases}
$$
Then $u_{k}^{+}$ is a bounded weak sub-solution to Eq. \eqref{eq1.1} in the ball
$B_{\rho}(x_{0})$. The same conclusion holds for the zero extension of
$u_{k}^{-}=(u-k)_{-}$ for the levels
$k\leqslant \inf\limits_{B_{\rho}(x_{0})\cap\partial\Omega}f$.
\end{lemma}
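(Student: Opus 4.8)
The plan is to verify the sub-solution inequality \eqref{eq1.5} directly for $u_k^+$ (the case of $u_k^-$ being symmetric), testing against an arbitrary non-negative $\varphi\in W_0(B_\rho(x_0))$. First I would observe that $u_k^+$ indeed belongs to $W(B_\rho(x_0))\cap L^\infty(B_\rho(x_0))$: on $B_\rho(x_0)\cap\Omega$ it equals $(u-k)_+$, which lies in $W^{1,1}$ with $\int G(x,|\nabla u_k^+|)\,dx<+\infty$ since $\nabla u_k^+ = \nabla u\,\chi_{\{u>k\}}$ there, and the zero extension across $\partial\Omega$ produces no singular part of the gradient precisely because $k\geqslant\sup_{B_\rho(x_0)\cap\partial\Omega}f$ and $u-f\in W_0(\Omega)$, so $(u-k)_+$ has zero trace on $\partial\Omega\cap B_\rho(x_0)$; hence its zero extension is in $W^{1,1}(B_\rho(x_0))$ with $\nabla u_k^+$ extended by zero.

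Next I would reduce the test-function identity to an integral over $\Omega$. Since $\mathbf A(x,\nabla u_k^+)=\mathbf A(x,0)$ a.e. on $B_\rho(x_0)\setminus\Omega$ and on $\{u\leqslant k\}\cap\Omega$, and the structure condition \eqref{eq1.2} with $g(x,0)=0$ forces $\mathbf A(x,0)=0$, we get
\begin{equation*}
\int_{B_\rho(x_0)}\mathbf A(x,\nabla u_k^+)\,\nabla\varphi\,dx
=\int_{B_\rho(x_0)\cap\Omega\cap\{u>k\}}\mathbf A(x,\nabla u)\,\nabla\varphi\,dx.
\end{equation*}
The idea is then to use $\varphi\,\chi_{\{u>k\}}$, or more precisely a truncation argument, as an admissible test function in the weak formulation for $u$ on $\Omega$. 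The natural choice is $\phi_\varepsilon:=\varphi\cdot\min\{\varepsilon^{-1}(u-k)_+,1\}$, which is non-negative, belongs to $W_0(\Omega)$ (it vanishes near $\partial\Omega$ because $\varphi$ is supported in $B_\rho(x_0)$ and $(u-k)_+$ has zero boundary trace there, and it is bounded), and satisfies
\begin{equation*}
\int_\Omega\mathbf A(x,\nabla u)\,\nabla\phi_\varepsilon\,dx = 0.
\end{equation*}
Expanding $\nabla\phi_\varepsilon$ by the product rule gives a term with $\nabla\varphi$ times the truncation and a term $\varepsilon^{-1}\varphi\,\chi_{\{0<u-k<\varepsilon\}}\,\mathbf A(x,\nabla u)\,\nabla u$, which is non-negative by the first inequality in \eqref{eq1.2}; dropping it yields
\begin{equation*}
\int_\Omega \mathbf A(x,\nabla u)\,\nabla\varphi\,\min\{\varepsilon^{-1}(u-k)_+,1\}\,dx\leqslant 0.
\end{equation*}
Letting $\varepsilon\to 0$, dominated convergence (the integrand is bounded by $|\mathbf A(x,\nabla u)||\nabla\varphi|\in L^1$ via \eqref{eq1.2}, Young's inequality \eqref{Youngineq} and $u\in W(\Omega)$, $\varphi\in W_0(B_\rho(x_0))$) sends $\min\{\varepsilon^{-1}(u-k)_+,1\}\to\chi_{\{u>k\}}$ a.e., producing exactly $\int_{\{u>k\}\cap\Omega}\mathbf A(x,\nabla u)\,\nabla\varphi\,dx\leqslant 0$, which is the desired $\int_{B_\rho(x_0)}\mathbf A(x,\nabla u_k^+)\nabla\varphi\,dx\leqslant 0$.

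The main obstacle, and the point requiring care, is the justification that $u_k^+$ (with the zero extension) genuinely lies in $W^{1,1}(B_\rho(x_0))$ with the gradient having no concentration on $\partial\Omega$ — this is where the hypothesis $k\geqslant\sup_{B_\rho(x_0)\cap\partial\Omega}f$ together with $u-f\in W_0(\Omega)$ is essential, since it guarantees $(u-k)_+=( (u-f)-(k-f) )_+\leqslant (u-f)_+ $ has zero trace on $\partial\Omega\cap B_\rho(x_0)$, so that the standard characterization of $W_0^{1,1}$ functions by zero extension applies. A secondary technical point is the admissibility of $\phi_\varepsilon$ as an element of $W_0(\Omega)$, i.e. checking $\int_\Omega G(x,|\nabla\phi_\varepsilon|)\,dx<+\infty$; this follows from the product rule, the bound $0\leqslant\min\{\varepsilon^{-1}(u-k)_+,1\}\leqslant 1$, the factor $\varepsilon^{-1}\chi_{\{u>k\}}$ multiplying $|\nabla u|$, boundedness of $\varphi$ and $\nabla\varphi$, condition $({\rm g}_1)$ controlling $G$ between the $p$- and $q$-powers, and $u\in W(\Omega)$, $\varphi\in W(B_\rho(x_0))$. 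Once these membership facts are in place the computation above is routine. The statement for $u_k^-$ follows by applying the same argument to $-u$, $-f$, $-k$, noting Eq. \eqref{eq1.1} and the structure conditions are invariant under $\xi\mapsto-\xi$ only after replacing $\mathbf A(x,\xi)$ by $-\mathbf A(x,-\xi)$, which still satisfies \eqref{eq1.2} and \eqref{eq1.9}; alternatively one repeats the computation verbatim with $(u-k)_-$ in place of $(u-k)_+$ and the reversed level condition $k\leqslant\inf_{B_\rho(x_0)\cap\partial\Omega}f$.
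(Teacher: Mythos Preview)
Your argument is correct and follows essentially the same route as the paper: approximate the inadmissible test function $\varphi\,\chi_{\{u>k\}}$ by an admissible one, drop the non-negative term coming from the ellipticity condition, and pass to the limit. The only difference is the choice of regularisation: the paper tests with $\dfrac{u_k^+}{u_k^+ + \varepsilon}\,\varphi$ in place of your $\varphi\,\min\{\varepsilon^{-1}(u-k)_+,1\}$, which leads to the same conclusion after an identical computation.
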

\begin{proof}
Let $\varphi\in W_{0}(B_{\rho}(x_{0}))$, $\varphi\geqslant0$ be arbitrary and test
\eqref{eq1.5} by $\dfrac{u_{k}^{+}}{u_{k}^{+}+\varepsilon}\,\varphi$, $\varepsilon>0$,
we obtain
$$
\int\limits_{B_{\rho}(x_{0})}
\mathbf{A}(x, \nabla u_{k}^{+})\,\dfrac{u_{k}^{+}}{u_{k}^{+}+\varepsilon}\,\nabla\varphi\,dx
=-\varepsilon\int\limits_{B_{\rho}(x_{0})}
\frac{\mathbf{A}(x, \nabla u_{k}^{+})\nabla u_{k}^{+}}
{(u_{k}^{+}+\varepsilon)^{2}}\,\varphi\,dx\leqslant0,
$$
letting $\varepsilon\rightarrow0$, we arrive at the required statement.
\end{proof}

\subsubsection{Proof of Theorem \ref{th1.2}}

Let $x_{0}\in \partial\Omega$ be arbitrary and fix $R>0$ so that condition
{\rm (${\rm g}_{2}$)} be fulfilled in $B_{R}(x_{0})$.
Let $\rho\in (0,R)$ be arbitrary. Further we will assume that
\begin{equation}\label{eq4.1}
\sum_{i\in \mathbb{N}} \tau(r_{i}, \lambda(r_{i}))=+\infty,
\quad r_{i}:=\rho/2^{i},
\end{equation}
$$
\tau(r, \lambda(r))=r\,g_{x_{0}}^{-1}
\left( \frac{C(B_{r/4}(x_{0})\setminus\Omega,\, B_{r}(x_{0}); \lambda(r))}{r^{n-1}} \right).
$$
Under the condition \eqref{eq4.1} we need to prove that
$\lim\limits_{\Omega\ni x\rightarrow x_{0}}u(x)=f(x_{0})$.
This equality will be established if we show that
$\limsup\limits_{\Omega\ni x\rightarrow x_{0}}u(x)\leqslant f(x_{0})$
and $\liminf\limits_{\Omega\ni x\rightarrow x_{0}}u(x)\geqslant f(x_{0})$.
The proof of the both inequalities is completely similar and we will prove only the first one.
To prove the inequality $\limsup\limits_{\Omega\ni x\rightarrow x_{0}}u(x)\leqslant f(x_{0})$
we argue by contradiction and assume the
$\limsup\limits_{\Omega\ni x\rightarrow x_{0}}u(x)> f(x_{0})$.

Let $k$ be an arbitrary number such that
$f(x_{0})<k<\limsup\limits_{\Omega\ni x\rightarrow x_{0}}u(x)$, choose a number
$R_{0}=R_{0}(k)$ so that $k\geqslant\sup\limits_{\partial\Omega\cap B_{R_{0}}(x_{0})}f$
and assume without loss that $\rho\leqslant R_{0}$. In the ball $B_{\rho}(x_{0})$
consider the function $u^{+}_{k}$, which was defined in Lemma \ref{lem4.2pr1}. For
$r\in(0,\rho]$ we set $M_{k}(r):=\sup\limits_{\partial B_{r}(x_{0})} u_{k}$. By our choices
\begin{equation}\label{eq4.2pr1}
\lim\limits_{r\rightarrow0}M_{k}(r)=a>0.
\end{equation}
Fix a sufficiently large positive number $C_{4}$, which will be specified later
and choose $\widetilde{r}_{0}\in(0,\rho)$ from the condition
\begin{equation}\label{eq4.3pr1}
\tau(\widetilde{r}_{0},\lambda(\widetilde{r}_{0}))
\geqslant C_{4}\widetilde{r}_{0}.
\end{equation}
Condition \eqref{eq4.3pr1} can always be realized, since otherwise we would have
for all $r\in(0,\rho)$ that $\tau(r,\lambda(r))\leqslant C_{4}r$, and consequently
$$
\sum\limits_{i\in\mathbb{N}}
\tau(r_{i}, \lambda(r_{i}))\leqslant\gamma,
$$
reaching a contradiction to \eqref{eq4.1}.
By {\rm (${\rm g}_{1}$)} and \eqref{eq4.2pr1}, condition \eqref{eq4.3pr1} can be
rewritten as
\begin{equation}\label{eq4.4pr1}
C(B_{\widetilde{r}_{0}/4}(x_{0})\setminus\Omega, B_{\widetilde{r}_{0}}(x_{0});
M_{k}(\widetilde{r}_{0})\lambda(\widetilde{r}_{0}))\geqslant
\gamma(a)\,g(x_{0},C_{4})\geqslant \gamma(a)\,C_{4}^{\,p-1}.
\end{equation}

Let us construct an auxiliary solution $v=v(x,M_{k}(\widetilde{r}_{0})\lambda(\widetilde{r}_{0}))$
of the problem \eqref{eq1.14}, \eqref{eq1.15} in
$\mathcal{D}_{0}=B_{\widetilde{r}_{0}}(x_{0})\setminus E_{0}$,
$E_{0}=B_{\widetilde{r}_{0}/4}(x_{0})\setminus\Omega$.
If $\gamma(a)\,C_{4}^{\,p-1}\geqslant C_{3}$, then condition \eqref{eq1.14pr1} is violated and
by Theorem \ref{th1.4} we have with some $\eta\in(0,1)$,
$$
v(x)\geqslant \eta\,\tau(\widetilde{r}_{0},M_{k}(\widetilde{r}_{0})\lambda(\widetilde{r}_{0}))
\quad \text{for a.a.}
\ \ x\in B_{\widetilde{r}_{0}/2}(x_{0})\setminus B_{3\widetilde{r}_{0}/8}(x_{0}).
$$

Consider the function
$\widetilde{u}_{k,0}=M_{k}(\widetilde{r}_{0})-u_{k}^{+}$,
evidently $\widetilde{u}_{k,0}$ is a weak super-solution to Eq. \eqref{eq1.1} in
$B_{\widetilde{r}_{0}}(x_{0})$. Moreover,
$\widetilde{u}_{k,0}\geqslant v$ on $\partial\mathcal{D}_{0}$, so by Lemma \ref{lem4.1pr1}
$\widetilde{u}_{k,0}\geqslant v$ in $\mathcal{D}_{0}$, and from the previous we obtain
\begin{equation}\label{eq4.5pr1}
M_{k}(\widetilde{r}_{0})-M_{k}(\widetilde{r}_{0}/2)\geqslant
\eta\,\tau(\widetilde{r}_{0},M_{k}(\widetilde{r}_{0})\lambda(\widetilde{r}_{0})).
\end{equation}
If
$\tau(\widetilde{r}_{0}/2, \lambda(\widetilde{r}_{0}/2))
\geqslant C_{4}r_{0}/2$ we set $\widetilde{r}_{1}=\widetilde{r}_{0}/2$.
If
$\tau(\widetilde{r}_{0}/2, \lambda(\widetilde{r}_{0}/2))
\leqslant C_{4}r_{0}/2$, similarly to \eqref{eq4.3pr1} we find a number
$i\geqslant2$ such that
$\tau(\widetilde{r}_{0}/2^{i}, \lambda(\widetilde{r}_{0}/2^{i}))
\geqslant C_{4} r_{0}/2^{i}$.
Let $i_{1}\geqslant2$ be the maximal number satisfying the above condition,
in this case we set $\widetilde{r}_{1}=\widetilde{r}_{0}/2^{i_{1}}$, then
inequality \eqref{eq4.5pr1} can be rewritten in the form
$$
M_{k}(\widetilde{r}_{0})-M_{k}(\widetilde{r}_{1})\geqslant
\eta\,\tau(\widetilde{r}_{0},M_{k}(\widetilde{r}_{0})\lambda(\widetilde{r}_{0})).
$$

Further we define the sequence $\widetilde{r}_{j}$ by induction.
Suppose we have chosen $\widetilde{r}_{0}$, $\widetilde{r}_{1}$,
\ldots, $\widetilde{r}_{j}$ such that
\begin{equation}\label{eq4.6pr1}
\tau(\widetilde{r}_{i}, \lambda(\widetilde{r}_{i}))
\geqslant C_{4}\widetilde{r}_{i}, \quad
i=0,1, \ldots, j,
\end{equation}
\begin{equation}\label{eq4.7pr1}
M_{k}(\widetilde{r}_{i+1})\leqslant M_{k}(\widetilde{r}_{i})-
\eta\,\tau(\widetilde{r}_{i},M_{k}(\widetilde{r}_{i})\lambda(\widetilde{r}_{i})),
\quad i=0,1, \ldots, j-1.
\end{equation}
Let us show how to choose $\widetilde{r}_{j+1}$.
By \eqref{eq4.6pr1} similarly to \eqref{eq4.4pr1} we obtain
\begin{equation}\label{eq4.8pr1}
\tau(\widetilde{r}_{j},M_{k}(\widetilde{r}_{j})\lambda(\widetilde{r}_{j}))
\geqslant \gamma(a)g(x_{0},C_{4})
\geqslant \gamma(a)\,C_{4}^{\,p-1}.
\end{equation}
Let us construct an auxiliary solution $v=v(x,M_{k}(\widetilde{r}_{j})\lambda(\widetilde{r}_{j}))$
of the problem \eqref{eq1.14}, \eqref{eq1.15} in
$\mathcal{D}_{j}=B_{\widetilde{r}_{j}}(x_{0})\setminus E_{j}$,
$E_{j}=B_{\widetilde{r}_{j}/4}(x_{0})\setminus\Omega$.
Consider the function
$\widetilde{u}_{k,j}=M_{k}(\widetilde{r}_{j})-u_{k}^{+}$,
since $\widetilde{u}_{k,j}\geqslant v$ on $\partial\mathcal{D}_{j}$,
by Lemma \ref{lem4.1pr1} we obtain $\widetilde{u}_{k,j}\geqslant v$ in $\mathcal{D}_{j}$.
If $\gamma(a)\,C_{4}^{\,p-1}\geqslant C_{3}$, then condition \eqref{eq1.14pr1} is violated and by
Theorem \ref{th1.4} we have
\begin{equation}\label{eq4.9pr1}
M_{k}(\widetilde{r}_{j})-M_{k}(\widetilde{r}_{j}/2)\geqslant
\eta\,\tau(\widetilde{r}_{j},M_{k}(\widetilde{r}_{j})\lambda(\widetilde{r}_{j})).
\end{equation}
If
$\tau(\widetilde{r}_{j}/2, \lambda(\widetilde{r}_{j}/2))
\geqslant C_{4}\widetilde{r}_{j}/2$ we set $\widetilde{r}_{j+1}=\widetilde{r}_{j}/2$.
And if
$\tau(\widetilde{r}_{j}/2, \lambda(\widetilde{r}_{j}/2))\leqslant C_{4}r_{j}/2$,
similarly to \eqref{eq4.3pr1} we find a number $i\geqslant2$ such that
$\tau(\widetilde{r}_{j}/2^{i}, \lambda(\widetilde{r}_{j}/2^{i}))
\geqslant C_{4} \widetilde{r}_{j}/2^{i}$, let $i_{j+1}\geqslant2$ be the maximal number
satisfying the above condition, in this case we set
$\widetilde{r}_{j+1}=\widetilde{r}_{j}/2^{i_{j+1}}$.
Then inequality \eqref{eq4.9pr1} can be rewritten as
\begin{equation}\label{eq4.10pr1}
M_{k}(\widetilde{r}_{j})-M_{k}(\widetilde{r}_{j+1})\geqslant
\eta\,\tau(\widetilde{r}_{j},M_{k}(\widetilde{r}_{j})\lambda(\widetilde{r}_{j})).
\end{equation}

We have
\begin{equation}\label{eq4.11pr1}
\sum_{i\in \mathbb{N}} \tau(\widetilde{r}_{i}, \lambda(\widetilde{r}_{i}))=+\infty.
\end{equation}
Indeed, by our choices
$$
\begin{aligned}
\sum_{i\in \mathbb{N}}\tau(r_{i}, \lambda(r_{i}))&=
\sum_{\substack{i\in \mathbb{N}\\ r_{i}\neq \widetilde{r}_{i}}} \tau(r_{i}, \lambda(r_{i}))+
\sum_{i\in \mathbb{N}} \tau(\widetilde{r}_{i}, \lambda(\widetilde{r}_{i}))
\\
&\leqslant
\gamma\sum_{i\in \mathbb{N}}
\frac{\rho}{2^{i}}+
\sum_{i\in \mathbb{N}} \tau(\widetilde{r}_{i}, \lambda(\widetilde{r}_{i}))
\leqslant
\gamma\rho+
\sum_{i\in \mathbb{N}} \tau(\widetilde{r}_{i}, \lambda(\widetilde{r}_{i})),
\end{aligned}
$$
from which the required \eqref{eq4.11pr1} follows.

To complete the proof of Theorem \ref{th1.2} we sum up inequality \eqref{eq4.10pr1}
for $j=0, 1, 2, \ldots, l$.
By \eqref{eq4.2pr1} and {\rm (${\rm g}_{1}$)} we obtain that
$$
\eta\,\gamma(a)\sum\limits_{j=0}^{l}
\tau(\widetilde{r}_{j}, \lambda(\widetilde{r}_{j}))
\leqslant \sum\limits_{j=0}^{l}
(M_{k}(\widetilde{r}_{j})-M_{k}(\widetilde{r}_{j+1}))
\leqslant M_{k}(r_{0})\leqslant 2M.
$$
This implies that
$\sum\limits_{i\in \mathbb{N}} \tau(\widetilde{r}_{i},
\lambda(\widetilde{r}_{i}))\leqslant\gamma(a,M)<+\infty$,
which contradicts \eqref{eq4.11pr1}.
This completes the proof of Theorem \ref{th1.2}.



\subsection{Harnack's inequality for double-phase elliptic equations,
proof of Theorem \ref{th1.3}}

To prove Theorem \ref{th1.3} we need the following version of De\,Giorgi type lemma,
which was proved in \cite{SkrVoitNA20}.

Let $u$ be a bounded weak solution to Eq. \eqref{eq1.1}. Let $\overline{x}\in \Omega$ be
an arbitrary point. Consider a ball $B_{8r}(\overline{x})\subset\Omega$ and denote by
$\mu_{\pm}$ and $\omega$ non-negative numbers such that
$$
\mu_{+}\geqslant \esssup_{B_{r}(\overline{x})}u, \quad
\mu_{-}\leqslant \essinf_{B_{r}(\overline{x})}u, \quad
\omega=\mu_{+}-\mu_{-}.
$$
\begin{lemma}
Let conditions  {\rm (${\rm g}_{1}$)} and
{\rm (${\rm g}_{3}$)} be fulfilled. Fix $\xi$, $a\in (0,1)$, then there exists
number $\nu\in(0,1)$ depending only on the data and $a$ such that if
\begin{equation}\label{eq4.14}
\left| \left\{ x\in B_{r}(\overline{x}): u(x)\geqslant \mu_{+}-\xi\,\omega  \right\}  \right|
\leqslant \nu\, \theta^{-2n}(r)\, |B_{r}(\overline{x})|,
\end{equation}
then either
\begin{equation}\label{eq4.15}
\xi\,\omega \leqslant r
\end{equation}
or
\begin{equation}\label{eq4.16}
u(x)\leqslant \mu_{+}-a\,\xi\,\omega \quad
\text{for a.a.} \ x\in B_{r/2}(\overline{x}).
\end{equation}
Likewise, if
\begin{equation}\label{eq4.17}
\left| \left\{ x\in B_{r}(\overline{x}): u(x)\leqslant \mu_{-}+\xi\,\omega  \right\}  \right|
\leqslant \nu\, [\theta(r)]^{-2n}\, |B_{r}(\overline{x})|,
\end{equation}
then either \eqref{eq4.15} holds, or
\begin{equation}\label{eq4.18}
u(x)\geqslant \mu_{-}+a\,\xi\,\omega \quad
\text{for a.a.} \ x\in B_{r/2}(\overline{x}).
\end{equation}
\end{lemma}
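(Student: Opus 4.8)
The plan is to prove the first assertion (the upper bound \eqref{eq4.16}); the second one, \eqref{eq4.18}, follows by applying the same argument to $-u$, which solves an equation of the same type (replace $\mathbf{A}(x,\xi)$ by $-\mathbf{A}(x,-\xi)$, which satisfies \eqref{eq1.2} with the same $g$). So fix $\xi,a\in(0,1)$ and suppose that \eqref{eq4.15} fails, i.e. that $\xi\,\omega>r$; I shall deduce \eqref{eq4.16}. This is a De\,Giorgi iteration, and the only genuinely new point is the bookkeeping of the dilation factor $\theta(r)$ produced by {\rm (${\rm g}_{3}$)}.

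First I would fix the iteration parameters: for $j=0,1,2,\dots$ put $\rho_{j}:=\tfrac{r}{2}(1+2^{-j})$, $k_{j}:=\mu_{+}-\xi\,\omega\bigl(a+(1-a)2^{-j}\bigr)$ (so $k_{0}=\mu_{+}-\xi\,\omega$ and $k_{j}\uparrow\mu_{+}-a\,\xi\,\omega$), cut-off functions $\zeta_{j}\in C_{0}^{\infty}(B_{\rho_{j}}(\overline{x}))$ with $0\leqslant\zeta_{j}\leqslant1$, $\zeta_{j}=1$ on $B_{\rho_{j+1}}(\overline{x})$, $|\nabla\zeta_{j}|\leqslant 2^{j+4}/r$, and set $w_{j}:=(u-k_{j})_{+}$, $A_{j}:=|B_{\rho_{j}}(\overline{x})\cap\{u>k_{j}\}|$, $y_{j}:=A_{j}/|B_{r}(\overline{x})|$. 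Testing \eqref{eq1.5} with $\varphi=w_{j}\,\zeta_{j}^{\,q}\geqslant0$, using the structure conditions \eqref{eq1.2}, the Young inequality \eqref{Youngineq} to absorb the term containing $\nabla\zeta_{j}$ (note $w_{j}\leqslant\xi\,\omega$ on its support), and condition {\rm (${\rm g}_{1}$)} — which gives $g(x,c\,t)\leqslant c^{\,q-1}g(x,t)$ for $c\geqslant1$, so that the factor $2^{j}$ from $|\nabla\zeta_{j}|$ produces only a harmless geometric weight $2^{\gamma j}$ while the argument of $g$ stays frozen at $\xi\,\omega/\rho_{j}$ — I would arrive at the Caccioppoli estimate
$$
\int_{B_{\rho_{j}}(\overline{x})}G\bigl(x,|\nabla w_{j}|\bigr)\,\zeta_{j}^{\,q}\,dx
\leqslant\gamma\,2^{\gamma j}\,\frac{\xi\,\omega}{\rho_{j}}\int_{A_{j}}g\!\left(x,\frac{\xi\,\omega}{\rho_{j}}\right)dx .
$$
Since $\rho_{j}\leqslant r<\xi\,\omega\leqslant\omega\leqslant 2M$, condition {\rm (${\rm g}_{3}$)} is applicable on $B_{\rho_{j}}(\overline{x})$ with $K=2M$, and together with the doubling $\theta(\rho_{j})\leqslant\gamma\,\theta(r)$ it lets me freeze the coefficient of $g$ at $\overline{x}$ at the cost of one factor $\theta(r)$ here, and of a second such factor when passing from $\int G(x,|\nabla w_{j}|)$ back to $\int|\nabla w_{j}|$ (split the integral according to $\{|\nabla w_{j}|\geqslant\xi\omega/\rho_{j}\}$ and its complement, using monotonicity of $g$). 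This gives the per-step bound
$$
\int_{B_{\rho_{j+1}}(\overline{x})}|\nabla w_{j}|\,dx\leqslant\gamma\,2^{\gamma j}\,\theta^{2}(r)\,\frac{\xi\,\omega}{r}\,A_{j},\qquad j=0,1,2,\dots .
$$

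Next I would feed this gradient bound into the De\,Giorgi-Poincar\'{e}/Sobolev inequality for $w_{j}\zeta_{j}\in W^{1,1}_{0}(B_{\rho_{j}}(\overline{x}))$ (as in \cite[Chap.\,2, Lemmas\,3.9 and 6.1]{LadUr}): since $w_{j}\geqslant k_{j+1}-k_{j}=(1-a)\,\xi\,\omega\,2^{-j-1}$ on the set whose measure is $A_{j+1}$, one gets $(k_{j+1}-k_{j})\,A_{j+1}^{(n-1)/n}\leqslant\gamma\,2^{\gamma j}\,\theta^{2}(r)\,\tfrac{\xi\omega}{r}\,A_{j}$, and after normalization by $|B_{r}(\overline{x})|\asymp r^{n}$ — in which the powers of $r$ cancel — this becomes the recursion
$$
y_{j+1}\leqslant\gamma(a)\,b^{\,j}\,\theta^{\,\frac{2n}{\,n-1}}(r)\,y_{j}^{\,1+\frac{1}{\,n-1}},\qquad j=0,1,2,\dots ,
$$
with $b>1$ depending only on the data. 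By the standard fast–geometric–convergence lemma, $y_{j}\to0$ provided $y_{0}\leqslant\bigl(\gamma(a)\,\theta^{\frac{2n}{n-1}}(r)\bigr)^{-(n-1)}b^{-(n-1)^{2}}=\nu\,\theta^{-2n}(r)$, where $\nu:=\gamma(a)^{-(n-1)}b^{-(n-1)^{2}}$ depends only on the data and $a$ — which is exactly hypothesis \eqref{eq4.14}. Hence $|B_{r/2}(\overline{x})\cap\{u>\mu_{+}-a\,\xi\,\omega\}|=0$, that is \eqref{eq4.16}.

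I expect the main obstacle to be precisely this control of $\theta(r)$: one has to keep the argument $\xi\omega$ of $g$ inside the window $[\rho_{j},2M]$ in which {\rm (${\rm g}_{3}$)} is available — which is exactly where the failure of \eqref{eq4.15}, namely $\xi\omega>r$, together with the global bound $|u|\leqslant M$, is used — and one has to invoke {\rm (${\rm g}_{3}$)} no more than twice per iteration step, so that the loss is $\theta^{2}(r)$ in the gradient estimate, $\theta^{2n/(n-1)}(r)$ in the measure recursion, and finally $\theta^{-2n}(r)$ in the smallness threshold, matching the exponent in \eqref{eq4.14}. Once this is arranged, the remaining ingredients — the Caccioppoli computation, the Young-inequality manipulations, the Sobolev step and the convergence lemma — are entirely routine; see \cite{SkrVoitNA20}.
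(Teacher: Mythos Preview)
Your argument is correct and is exactly the expected De\,Giorgi iteration with the $\theta(r)$-bookkeeping that yields the threshold $\nu\,\theta^{-2n}(r)$. The paper itself does not prove this lemma at all but simply quotes it from \cite{SkrVoitNA20}; your sketch is essentially the proof one finds there, so there is nothing to compare beyond noting that you have supplied what the paper outsources.
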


Fix $x_{0}\in \Omega$ and for $0<8\rho<R$ construct the ball
$B_{\rho\tau}(x_{0})$, $\tau\in(0,1)$ and set $u_{0}:=u(x_{0})$.
Following Krylov and Safonov \cite{KrlvSfnv1980} consider the equation
$$
\max\limits_{B_{\rho\tau}(x_{0})}u=\frac{u_{0}}{2(1-\tau)^{l_{1}}}
\left( \frac{ \theta\big((1-\tau)\rho\big) }{\theta(\rho)} \right)^{l_{2}},
$$
where $l_{1}$, $l_{2}>0$ will be chosen later depending only on the known data.

Further we will assume that
\begin{equation}\label{eq4.17pr1}
u_{0}\geqslant C_{2}\rho\, \frac{\theta^{\frac{2n}{p-1}}(\rho)}{\lambda(\rho)}.
\end{equation}
Let $\tau_{0}\in (0,1)$ be the maximal root of the above equation.
Fix $\overline{x}$ by the condition
$$
u(\overline{x})=\max\limits_{B_{\rho\tau_{0}}(x_{0})}u=
\frac{u_{0}}{2(1-\tau_{0})^{l_{1}}}
\left( \frac{ \theta\big((1-\tau_{0})\rho\big) }{\theta(\rho)} \right)^{l_{2}}.
$$
Since $B_{\frac{\rho(1-\tau_{0})}{2}}(\overline{x})\subset
B_{\frac{\rho(1+\tau_{0})}{2}}(x_{0})$, by our choice of $\theta(\rho)$
we have
$$
\max_{B_{\frac{\rho(1-\tau_{0})}{2}}(\overline{x})}u
\leqslant \frac{2^{\,l_{1}-1}u_{0}}{(1-\tau_{0})^{l_{1}}}
\left(\frac{ \theta\big(\frac{1-\tau_{0}}{2}\rho\big)}{\theta(\rho)} \right)^{l_{2}}
=2^{\,l_{1}}u(\overline{x})
\left( \frac{\theta\big(\frac{1-\tau_{0}}{2}\rho\big)}
{\theta\big((1-\tau_{0})\rho\big) } \right)^{l_{2}}
\leqslant 2^{\,l_{1}+l_{2}} u(\overline{x}).
$$

\textsl{Claim.}
There exists a positive number $\nu\in(0,1)$ depending only on the known data
such that
\begin{equation}\label{eq4.18pr1}
\left| \left\{ x\in B_{\frac{\rho(1-\tau_{0})}{2}}(\overline{x}):
u(x)\geqslant \frac{u(\overline{x})}{2}  \right\}  \right|\geqslant \nu\,
\theta^{-2n}  \bigg( \frac{1-\tau_{0}}{2}\rho \bigg)
| B_{\frac{\rho(1-\tau_{0})}{2}}(\overline{x})|.
\end{equation}

Indeed, in the opposite case we apply \eqref{eq4.14}--\eqref{eq4.16} with the choices
$$
\mu_{+}=2^{\,l_{1}+l_{2}}u(\overline{x}), \quad
\xi\,\omega=\left(2^{\,l_{1}+l_{2}}-\frac{1}{2}\right)u(\overline{x}), \quad
a=\frac{2^{\,l_{1}+l_{2}}-\frac{3}{4}}{2^{\,l_{1}+l_{2}}-\frac{1}{2}}
\in(0,1).
$$
The condition \eqref{eq4.17pr1} obviously implies
$\xi\,\omega\geqslant (1-\tau_{0})\rho/2$. Therefore we can conclude that
$$
u(\overline{x})\leqslant\max\limits_{B_{\frac{\rho(1-\tau_{0})}{4}}(\overline{x})}u
\leqslant\frac{3}{4}\,u(\overline{x}),
$$
reaching a contradiction, which proves the claim.

Set $r=\dfrac{1-\tau_{0}}{2}\rho$,
$E:= B_{r}(\overline{x})\cap \left\{u\geqslant \dfrac{u(\overline{x})\lambda(\rho)}{2}\right\}$,
then inequality \eqref{eq4.18pr1} translates into
\begin{equation}\label{eq4.19pr1}
|E|\geqslant \nu\, \theta^{-2n}(r)\,|B_{r}(\overline{x})|.
\end{equation}

We will apply Theorem \ref{th1.4}, for this we consider an auxiliary solution
$v(x):=v(x,m)$, $m=\frac{1}{2}u(\overline{x})\lambda(\rho)$ of the problem
\eqref{eq1.14}, \eqref{eq1.15} in $\mathcal{D}=B_{4\rho}(\overline{x})\setminus E$.

First, we need to check the inequality
\begin{equation}\label{eq4.20pr1}
C(E, B_{4\rho}(\overline{x});m)\geqslant C_{3}\,\rho^{n-1}.
\end{equation}
Set $\Phi(x, {\rm v}):=\int\limits_{0}^{{\rm v}} g(x,s)\,ds$,
by {\rm (${\rm g}_{1}$)} $\Phi(x, {\rm v})\asymp G(x, {\rm v})$, $x\in\Omega$, ${\rm v}>0$.
Using the Poincar\'{e} inequality and conditions {\rm (${\rm g}_{1}$)}, {\rm (${\rm g}_{2}$)},
for any $\varphi\in W_{0}(B_{4\rho}(\overline{x}))$ such that $\varphi\geqslant1$ on $E$,
we obtain
$$
\begin{aligned}
&\frac{1}{\rho}\int\limits_{B_{4\rho}(\overline{x})}
g(\overline{x},m\varphi/\rho)\,\varphi\,dx\leqslant
\frac{\gamma}{m}\int\limits_{B_{4\rho}(\overline{x})}
\Phi(\overline{x},m\varphi/\rho)\,dx
\\
&\leqslant \gamma\int\limits_{B_{4\rho}(\overline{x})}
g(\overline{x},m\varphi/\rho)\,|\nabla\varphi|\,dx
\leqslant \gamma\int\limits_{B_{4\rho}(\overline{x})}
g(x,m\varphi/\rho)\,|\nabla\varphi|\,dx
\\
&\leqslant \frac{1}{2\gamma\rho}\int\limits_{B_{4\rho}(\overline{x})}
g(x,m\varphi/\rho)\,\varphi\,dx+\gamma\int\limits_{B_{4\rho}(\overline{x})}
g(x,m |\nabla\varphi|)\,|\nabla\varphi|\,dx
\\
&\leqslant \frac{1}{2\rho}\int\limits_{B_{4\rho}(\overline{x})}
g(\overline{x}, m\varphi/\rho)\,\varphi\,dx+\gamma
\int\limits_{B_{4\rho}(\overline{x})}
g(x, m|\nabla\varphi|)\,|\nabla\varphi|\,dx,
\end{aligned}
$$
which implies that
$$
\begin{aligned}
&\int\limits_{B_{4\rho}(\overline{x})}
g(x,m|\nabla\varphi|)\,|\nabla\varphi|\,dx
\geqslant
\frac{\gamma^{-1}}{\rho}\int\limits_{B_{4\rho}(\overline{x})}
g(\overline{x}, m\varphi/\rho)\,\varphi\,dx
\\
& \geqslant
\gamma^{-1} g(\overline{x},m/\rho)\,\frac{|E|}{\rho}
\geqslant \gamma^{-1}\nu\,\theta^{-2n}(r)\, g(\overline{x},m/\rho)\,
\left(\frac{r}{\rho}\right)^{n}\rho^{n-1}.
\end{aligned}
$$
And hence
\begin{equation}\label{eq4.21pr1}
C(E,B_{4\rho}(\overline{x});m)\geqslant
\gamma^{-1}\nu\,\theta^{-2n}(r)\, g(\overline{x},m/\rho)\,
\left(\frac{r}{\rho}\right)^{n}\rho^{n-1}.
\end{equation}
By {\rm (${\rm g}_{1}$)}, \eqref{eq4.17pr1} and our choice of $u(\overline{x})$ we have
$$
\begin{aligned}
g(\overline{x},m/\rho)&=
g\bigg( \overline{x}, \left( \frac{\theta(r)}{\theta(\rho)} \right)^{l_{2}}
\left(\frac{\rho}{r}\right)^{l_{1}}\frac{u_{0}\lambda(\rho)}{2\rho} \bigg)
\\
&\geqslant g\bigg( \overline{x}, \frac{C_{2}}{2}\left(\frac{\rho}{r}\right)^{l_{1}}
\theta^{\,l_{2}}(r)\,  \theta^{\frac{2n}{p-1}-l_{2}}(\rho) \bigg)
\\
&\geqslant
\gamma^{-1}C_{2}^{\,p-1}\,\theta^{\,l_{2}(p-1)}(r)\left(\frac{\rho}{r}\right)^{l_{1}(p-1)}
g\left(\overline{x}, \theta^{\frac{2n}{p-1}-l_{2}}(\rho)\right).
\end{aligned}
$$
Choosing $l_{1}$ and $l_{2}$ from the conditions $l_{1}=\dfrac{n}{p-1}$,
$l_{2}=\dfrac{2n}{p-1}$, from the last inequality we obtain
$$
g(\overline{x}, m/\rho)\geqslant \gamma^{-1}C_{2}^{\,p-1}\,\theta^{2n}(r)
\left(\frac{\rho}{r}\right)^{n}g(\overline{x},1)\geqslant
\gamma^{-1}C_{2}^{\,p-1}\,\theta^{2n}(r)
\left(\frac{\rho}{r}\right)^{n}.
$$
Therefore, by \eqref{eq4.21pr1}, choosing $C_{2}$ so that
$\gamma^{-1}\nu\,C_{2}^{\,p-1}\geqslant C_{3}$, we arrive at the required \eqref{eq4.20pr1}.

Using \eqref{eq4.20pr1} and Theorem \ref{th1.4}, we conclude that
$$
v(x)\geqslant \gamma^{-1}\rho\,
g_{\overline{x}}^{-1}\left( \frac{C(E,B_{4\rho}(\overline{x});m)}{\rho^{\,n-1}} \right)
\quad \text{for a.a.} \ x\in B_{2\rho}(\overline{x})\setminus B_{\rho}(\overline{x}).
$$
Inequality \eqref{eq4.21pr1} and condition {\rm (${\rm g}_{1}$)} imply that
$$
g^{-1}_{\overline{x}}\left( \frac{C(E,B_{4\rho}(\overline{x});m)}{\rho^{n-1}} \right)
\geqslant \gamma^{-1}\nu^{-\frac{1}{p-1}}\,\theta^{-\frac{2n}{p-1}}(r)
\left( \frac{r}{\rho} \right)^{\frac{n}{p-1}}\frac{m}{\rho}.
$$
And using our choices of $m$ and $u(\overline{x})$, from the previous we obtain
\begin{equation}\label{eq4.22pr1}
v(x)\geqslant \gamma^{-1}\,\theta^{-\frac{2n}{p-1}}(\rho)\,\lambda(\rho)\,u_{0},
\quad x\in B_{2\rho}(\overline{x})\setminus B_{\rho}(\overline{x}).
\end{equation}

By our construction $u\geqslant v$ on $\partial\mathcal{D}$, and therefore,
by Lemma \ref{lem4.1pr1} and \eqref{eq4.22pr1} we have
$$
\inf\limits_{B_{\rho}(x_{0})}u\geqslant
\inf\limits_{B_{2\rho}(\overline{x})}u\geqslant
\inf\limits_{\partial B_{2\rho}(\overline{x})}u\geqslant
\inf\limits_{\partial B_{2\rho}(\overline{x})}v\geqslant
\gamma^{-1}\,\theta^{-\frac{2n}{p-1}}(\rho)\,\lambda(\rho)\,u_{0},
$$
which completes the proof of Theorem \ref{th1.3}.




\vskip3.5mm
{\bf Acknowledgements.} The research of the second author was supported by grants of Ministry of Education and Science of Ukraine
(project numbers are 0118U003138, 0119U100421).

\bigskip

CONTACT INFORMATION

\medskip

Oleksandr V.~Hadzhy\\
Institute of Applied Mathematics and Mechanics,
National Academy of Sciences of Ukraine, Gen. Batiouk Str. 19, 84116 Sloviansk, Ukraine\\
aleksanderhadzhy@gmail.com

\medskip
Igor I.~Skrypnik\\Institute of Applied Mathematics and Mechanics,
National Academy of Sciences of Ukraine, Gen. Batiouk Str. 19, 84116 Sloviansk, Ukraine\\
Vasyl' Stus Donetsk National University,
600-richcha Str. 21, 21021 Vinnytsia, Ukraine\\iskrypnik@iamm.donbass.com

\medskip
Mykhailo V.~Voitovych\\Institute of Applied Mathematics and Mechanics,
National Academy of Sciences of Ukraine, Gen. Batiouk Str. 19, 84116 Sloviansk, Ukraine\\voitovichmv76@gmail.com

\end{document}